    \newtheorem{theorem}{Theorem}
    \newtheorem{lemma}[theorem]{Lemma}
    \newtheorem{proposition}[theorem]{Proposition}
\theoremstyle{definition} 
\newcommand{\eps}{\varepsilon}
\newcommand{\lstar}{{\raise-0.15ex\hbox{$\scriptstyle \ast$}}}
\theoremstyle{remark} 
\newcommand{\rr}{\mathbb{R}}
\newcommand{\ind}{{\bf 1}}
\newcommand{\re}{\textup{Re}}
\newcommand{\sech}{\textup{ sech }}
\newcommand{\Sineb}{\textup{Sine}_\beta}
\newcommand{\wt}{\widetilde}
\newcommand{\Psine}{\tilde{ \mathcal{J}}_{\textup{Sine}}}
\newcommand{\ldp}{\mathcal{I}}
\newcommand{\II}{\mathcal{H}}
\newcommand{\sineldp}{I_{\text{Sine}}}
\newcommand{\ed}{\stackrel{d}{=}}
\newcommand{\os}{\text{Osc}}
\newcommand{\Bess}{\textup{Bess}_{a,\beta}}
\newcommand{\Pbess}{\tilde{ \mathcal{J}}_{\Bess}}
\newcommand{\bessldp}{I_{\Bess}}
\newcommand{\Pathbess}{\mathcal{J}_{\Bess}}
\newcommand{\Pathtilde}{\mathcal{J}_{\tilde \varphi \text{ Path}}}
\newcommand{\im}{\textup{Im}}
\begin{document}

\title{The random matrix hard edge: rare events and a transition}%

\author{Diane Holcomb\footnote{Department of Mathematics, KTH,  holcomb@kth.se} }%




\maketitle

\begin{abstract}
We study properties of the point process that appears as the local limit at the random matrix hard edge. We show a transition from the hard edge to bulk behavior and give a central limit theorem and large deviation result for the number of points in a growing interval $[0,\lambda]$ as $\lambda \to \infty$. We study these results for the square root of the hard edge process. In this setting many of these behaviors mimic those of the $\Sineb$ process.

\end{abstract}

\section{Introduction}

In the study of classical Hermitian random matrix ensembles three distinct types of local behavior have been observed. The Gaussian Unitary Ensemble (GUE) exhibits one type of behavior in the interior, or bulk, of its spectrum and another at its edge. Scaling the $n \times n$ model and passing to the $n\to \infty$ limit one obtains the Sine$_2$ and Airy$_2$  processes respectively. The Laguerre (also called Wishart) and Jacobi (MANOVA) ensembles exhibit the same behavior in the bulk, but depending on the choice of parameters may exhibit  two different types of behavior at the edge. In one case the limit process at the edge is again the Airy$_2$ process. This is referred to as a soft edge. In the other case the limit process at the edge is a family of determinantal point processes where the determinant is defined in terms of Bessel functions $J_\alpha$,

\[
K_\alpha (x,y) = \frac{J_\alpha(\sqrt{x})\sqrt y J'_\alpha(\sqrt{y}) - \sqrt{x} J'_\alpha(\sqrt x)J_\alpha(\sqrt y)}{2(x-y)}.
\]  
This type of limiting behavior will occur when the eigenvalues of the random matrix are pushed against some hard constraint, and so will be referred to as hard-edge behavior.

The Laguerre and Jacobi ensembles may be generalized to one parameter families of point processes called $\beta$-ensembles defined through their joint distribution. In particular the $\beta$-Laguerre ensemble has joint density 
\begin{equation}
\label{wishart}
 p_{n,m,\beta}(\lambda_1, \lambda_2, \dots, \lambda_n)= \frac{1}{Z_{\beta,n,m}}\prod_{i=1}^n \lambda_i^{\frac{\beta}{2}(m-n+1)-1} e^{-\frac{\beta}{2} \lambda_i} \prod_{j<k} |\lambda_j-\lambda_k|^\beta.
\end{equation}
Here $\beta$ may be any value greater than $0$, $m \geq n$,  and $Z_{\beta,n,m}$ is an explicitly computable normalizing constant. With a slight abuse of terminology we refer to the points of the $\beta$-Laguerre ensemble as its eigenvalues. The local limits of these $\beta$-ensembles may again be studied, but can no longer be described by a determinantal point process.

As in the classical case, the $\beta$-Laguerre ensemble can exhibit two different types of limiting behavior at the lower edge of the spectrum. For $m/n \to \gamma \neq  1$ the lower edge of the spectrum exhibits soft-edge behavior.  In this case the appropriately rescaled lower edge of the $\beta$-Laguerre ensemble converges to the Airy$_\beta$ process \cite{RRV}. In the case where instead $m = n + a_n$ and $a_n \to a$ the lower edge of the spectrum exhibits hard-edge behavior \cite{RR}. In the intermediate regime where $a_n\to \infty, a_n/n\to 0$ it is expected that the behavior is soft edge and so the limiting process will be Airy$_\beta$.  For this regime there is a partial result in the case $\beta=2$ for $a_n \sim c\sqrt{n}$ by Deift, Menon, and Trogdon (see \cite{DMT}), but otherwise the problem remains open. Similar soft and hard edge scaling results were shown for the $\beta$-Jacobi ensemble \cite{HMF}. Later universality results extended the soft edge limit to a wide class of $\beta$-ensembles \cite{bour2,KRV}, and recent work by Rider and Waters did the same for the hard edge \cite{BRPW}.

Let $\lambda_0 < \lambda_1< \lambda_2< ...$ be the ordered eigenvalues of the $\beta$-Laguerre ensemble. For the hard edge regime when $a_n \to a$ the set $\{n \lambda_0,n \lambda_1,  ..., n \lambda_k\}$ converges to the first $k$ eigenvalues of the Stochastic Bessel operator introduced by Ram\'irez and Rider in \cite{RR}.  The operator acts on functions  $\rr_+ \to \rr$ and is given by:
\[\mathfrak{G}_{\beta,a}=-\exp \left[ (a+1)x+ \frac{2}{\sqrt{\beta}} b(x) \right] \cdot \frac{d}{dx} \left( \exp\left[-ax- \frac{2}{\sqrt{\beta}}b(x)\right]\frac{d}{dx}\right),\]
with Dirichlet boundary conditions at 0 and Neumann conditions at infinity, where $b(x)$ is a Brownian motion, $a>-1$ and $\beta>0$.  Moreover, it can be shown that the spectrum defines a simple point process which will be referred to as the `hard edge process'. For further discussion of the Stochastic Bessel operator see Ram\'irez and Rider \cite{RR}.

The square root of the hard edge process gives a point process description for the singular values of $\mathfrak{G}_{\beta,a}$. This scale is natural one for studying the transition from the edge to the bulk, and moreover, in this setting the asymptotic likelihood of rare events mimic those of the $\Sineb$ process. We will denote the singular value process by $\Bess$ in honor of the Bessel functions present in the determinantal description. The results for the $\Bess$ process will be stated in terms of its counting function $M_{a,\beta}(\lambda)$ which we define to be the number of points of the $\Bess$ process in the interval $[0,\lambda]$.

For a bit of amplification on the choice of the singular value process consider the following: we may perform the change of variables $y= \sqrt{x}$ in the Marchenko-Pastur distribution, the resulting distribution shows that the mean spacing after the change is the same order for both the edge and the bulk. This is confirmed by the work Edelman and LaCroix \cite{AEMLC}. They show that the singular values of a GUE are distributed as the union of the singular values of two independent Laguerre ensembles with hard edge type distribution.  A similar decomposition may also be done for the GOE \cite{FBMLC}.

In the bulk of the spectrum, with the appropriate centering and rescaling, Jacquot and Valk\'o showed that the eigenvalues of the $\beta$-Laguerre ensemble converge to the $\Sineb$ process \cite{SJBV}. This process was first introduced introduced as the limit of the $\beta$-Hermite ensemble by Valk\'o and Vir\'ag \cite{BVBV}. In this paper we make use of tools developed for the $\Sineb$ process to study the $\Bess$ process and show a transition from $\Bess$ to $\Sineb$. The $\Sineb$ process may be described via its counting function in the following way: let $\alpha_\lambda$ be a one parameter family of diffusions indexed by $\lambda$ that satisfy
\begin{equation}
\label{eq:alpha}
d\alpha_\lambda = \lambda \frac{\beta}{4}e^{-\frac{\beta}{4}t} dt + \re \left[ (e^{-i \alpha_\lambda}-1 )dZ\right],
\end{equation}
where $Z_t = X_t + i Y_t$ with $X$ and $Y$ standard Brownian motions and $\alpha_\lambda(0) = 0$. The $\alpha_\lambda$ are coupled through the noise term. Define $N_\beta(\lambda) = \frac{1}{2\pi}\lim_{t\to \infty} \alpha_\lambda(t)$, then $N_\beta(\lambda)$ is the counting function for $\Sineb$.

We might expect that as we move away from the edge of the Bessel process the effects of the edge will lessen and it will begin to behave like the bulk process. In this paper we show that this is indeed the case; there is a transition from $\Bess$ to $\Sineb$ as we move from the edge (near 0) out towards $\infty$ in the $\Bess$ process for $a>0$.   We also show two results on the asymptotic probability of various rare events for $\Bess$. The first is a central limit theorem for the number of points in the interval $[0,\lambda]$ as $\lambda \to \infty$. The second is a large deviation result on the asymptotic density of points in a large interval $[0,\lambda]$. We expect to see roughly $2\lambda / \pi$ many point in a large interval. We consider the asymptotic probability of seeing roughly $\rho \lambda$ many points for $\rho \ne 2/\pi$.

\subsection{Results}

We begin with the transition between the hard edge process and $\Sineb$. 
\begin{theorem}
\label{thm:transition}
Let $a>0$ and $\beta>0$ fixed, then
\begin{equation}
\frac{1}{4} (\Bess-\lambda) \Rightarrow \Sineb
\end{equation}
as $\lambda \to \infty$.
\end{theorem}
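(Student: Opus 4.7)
The plan is to derive a phase/diffusion representation for the counting function $M_{a,\beta}$ of the $\Bess$ process that is structurally parallel to \eqref{eq:alpha} for $\Sineb$, and then show that under the affine reparametrization $\mu \mapsto \lambda + 4\mu$ this diffusion converges to $\alpha_\mu$ as $\lambda \to \infty$. Starting from the Stochastic Bessel operator $\mathfrak{G}_{\beta,a}$, I would apply a Pr\"ufer/Riccati transform to its eigenvalue equation (as developed by Ram\'irez and Rider \cite{RR}) to produce a family of phase diffusions $\vfi_\mu(t)$ on $(0,\infty)$, indexed by the singular-value parameter $\mu$, such that $\frac{1}{2\pi}\lim_{t\to\infty}\vfi_\mu(t) = M_{a,\beta}(\mu)$ and such that increments $\vfi_{\lambda+4\mu}(\infty) - \vfi_\lambda(\infty)$ count the points of $\tfrac{1}{4}(\Bess-\lambda)$ in $[0,\mu]$.

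The heart of the argument is an explicit change of time and rescaling of $\vfi_{\lambda+4\mu} - \vfi_\lambda$ that, as $\lambda \to \infty$, converts its SDE into \eqref{eq:alpha}. I expect the analysis to split $(0,\infty)$ into a boundary region $t \lesssim \log \lambda$ where the phase $\vfi_\lambda$ accumulates essentially deterministically along the hard-edge potential (so that subtracting $\vfi_\lambda$ implements the shift by $\lambda$), and a bulk region $t \gtrsim \log \lambda$ where, after time-shifting, the phase is driven by a white noise environment whose Pr\"ufer rotations produce exactly the complex noise term $\re\bigl[(e^{-i\alpha}-1)\,dZ\bigr]$. The deterministic drift, coming from the spectral gap at scale $\mu^2$ of $\mathfrak{G}_{\beta,a}$, should converge after the factor $\tfrac{1}{4}$ rescaling to $\mu\frac{\beta}{4}e^{-\beta t/4}\,dt$; the $a$-dependent contribution from the Bessel operator is swallowed into the boundary region and vanishes once integrated against this exponentially decaying envelope.

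Once the diffusion convergence is established, the finite-dimensional convergence of the family $(\vfi_{\lambda+4\mu_i} - \vfi_\lambda)_i$ to $(\alpha_{\mu_i})_i$ at any fixed collection of spectral points, combined with passage $t \to \infty$ and standard tightness for simple point processes on compact intervals, yields weak convergence of $\tfrac{1}{4}(\Bess-\lambda)$ to $\Sineb$. The main obstacle I anticipate is the uniform control of the Pr\"ufer phase across the boundary-to-bulk transition on a time window that grows with $\lambda$: one must show that two a priori dependent objects (the $\Bess$-specific boundary phase and the emerging Sine dynamics) decouple in the limit. I expect this to be handled via a coupling/Gronwall estimate on the difference of the Bessel and Sine SDEs on the bulk region, together with a tail bound showing that the boundary phase is deterministic modulo $o(1)$ error, uniformly in the spectral window, as $\lambda \to \infty$.
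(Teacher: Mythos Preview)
Your high-level plan---derive a Pr\"ufer phase $\vfi_\mu$ for $\Bess$, study the difference $\vfi_{\lambda+x}-\vfi_\lambda$, prove SDE convergence to the Sine phase, then control the $t\to\infty$ tail---is exactly the skeleton the paper uses. But the mechanism you propose for the SDE convergence is not the one that actually works, and the gap is substantive.

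You propose to split time at $t\sim\log\lambda$, treat the boundary region as asymptotically deterministic, and time-shift so that the remaining dynamics become the Sine SDE. The paper does \emph{not} time-shift: it works on a \emph{fixed} compact interval $[0,T]$ and sends $\lambda\to\infty$ there. On $[0,T]$ the drift $\beta\lambda e^{-\beta t/8}$ forces $\vfi_{a,\lambda}$ to rotate very fast, and this rapid rotation is what drives the limit. Writing $\psi_{a,\lambda,x}=\vfi_{a,\lambda+x}-\vfi_{a,\lambda}$, the SDE for $\psi$ has (i) extra drift terms of the form $\im\bigl[e^{ic\vfi_{a,\lambda}}(e^{-ic\psi}-1)\bigr]\,dt$ and (ii) a noise term $\im\bigl[e^{i\vfi_{a,\lambda}/2}(e^{-i\psi/2}-1)\bigr]\,dB_t$ driven by a \emph{single} Brownian motion. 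The paper kills (i) by an oscillatory-integral bound (their Proposition on $\int_0^s e^{ic\vfi_{a,\lambda}}\,dt$), and---this is the step your sketch does not supply---converts (ii) into the \emph{complex} Sine noise by proving that
\[
W_{\lambda,1}(t)=\sqrt{2}\int_0^t\sin\Bigl(\tfrac{\vfi_{a,\lambda}}{2}\Bigr)\,dB,\qquad
W_{\lambda,2}(t)=\sqrt{2}\int_0^t\cos\Bigl(\tfrac{\vfi_{a,\lambda}}{2}\Bigr)\,dB
\]
converge jointly to two \emph{independent} Brownian motions (martingale CLT: the cross-variation $\int_0^t\sin\vfi_{a,\lambda}\,ds\to 0$ by the same oscillatory estimate). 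A Kurtz--Protter theorem then gives convergence of $\psi$ to the Sine phase; a deterministic change $t\mapsto t/2$, $\psi\mapsto\psi/2$ matches the $\Sineb$ normalization and produces the factor $4$.

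Your ``boundary phase deterministic modulo $o(1)$'' picture, together with a time shift, does not by itself explain how one real Brownian motion in the Bessel SDE becomes the two-dimensional $Z$ in \eqref{eq:alpha}; that decorrelation is the crux and comes from fast rotation on a fixed window, not from pushing the analysis past $\log\lambda$. A $\log\lambda$ time scale does enter the paper, but only in the separate tail step (showing $\lfloor\psi(\infty)+2\pi\rfloor_{4\pi}=\lfloor\psi(T)+2\pi\rfloor_{4\pi}$ with high probability), handled by a Markov/coupling argument rather than by a Gronwall comparison of the two SDEs.
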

This can be understood by thinking of this as the distribution of the points in any neighborhood of $\lambda$ scaled down by 4 converges as $\lambda \to \infty$ to the distribution of $\Sineb$ in a neighborhood of 0. The centering of the neighborhood in the $\Sineb$ process is irrelevant since the process is translation invariant.

We now give the two results on the asymptotic behavior of $M_{a,\beta}(\lambda)$ as $\lambda \to \infty$. The first of these gives a central limit theorem for the number of points in the interval.

\begin{theorem}
\label{thm:bessclt}
Fix $\beta>0, a>-1$. As $\lambda \to \infty$ we have that 
\[
\frac{1}{\sqrt{\log \lambda} }\left( M_{a,\beta}(\lambda) - \frac{2 \lambda}{\pi} \right) \Rightarrow \mathcal{N}\Big( 0, \frac{1}{\beta \pi^2}\Big).
\]
\end{theorem}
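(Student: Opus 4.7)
The plan is to adapt the phase-diffusion / martingale CLT framework used by Valk\'o and Vir\'ag for the $\Sineb$ CLT to the $\Bess$ setting. First I would exhibit an SDE representation
\[
M_{a,\beta}(\lambda) \;=\; \frac{1}{2\pi}\lim_{t\to\infty}\varphi_\lambda(t),
\]
where $\varphi_\lambda$ is a Pr\"ufer-type phase satisfying an It\^o SDE driven by Brownian motion, with coefficients depending on $(\lambda,a,\beta)$. Such a representation should follow from a Riccati / oscillation-theoretic analysis of the Stochastic Bessel operator $\mathfrak{G}_{\beta,a}$: eigenvalues of $\mathfrak{G}_{\beta,a}$ below $\lambda^2$ correspond precisely to $2\pi$-increments of the phase function associated to an initial-value solution of the eigenvalue equation, in complete analogy with the construction underlying \eqref{eq:alpha}.

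Given the SDE, I would decompose $\varphi_\lambda = \mu_\lambda + M_\lambda$ into its compensator and a local martingale. The deterministic drift should produce $\mu_\lambda(\infty) = 4\lambda + O(1)$ as $\lambda \to \infty$, which after dividing by $2\pi$ reproduces the centering $2\lambda/\pi$ and in particular gives $\mathbb{E}\,M_{a,\beta}(\lambda) = 2\lambda/\pi + O(1)$. For the fluctuations, I would establish
\[
\frac{\langle M_\lambda\rangle_\infty}{\log\lambda} \;\longrightarrow\; \frac{4}{\beta} \qquad (\lambda\to\infty),
\]
and then apply a martingale CLT (Rebolledo's functional version, or Dubins--Schwarz combined with this scalar convergence) to conclude $M_\lambda(\infty)/\sqrt{\log\lambda} \Rightarrow \mathcal{N}(0,4/\beta)$. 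Dividing by $2\pi$ yields the claimed variance $1/(\beta \pi^2)$.

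The main obstacle is the quadratic-variation asymptotic. One expects $\langle M_\lambda\rangle_\infty = \int_0^\infty g_\lambda(\varphi_\lambda(s),s)\,ds$ for an explicit coefficient $g_\lambda$, and I would split this integral into an edge regime (small $s$, where the $(a+1)x$ drift in $\mathfrak{G}_{\beta,a}$ dominates), a transition regime, and a bulk regime (large $s$). The edge and transition contributions must be shown to be $O(1)$ uniformly in $\lambda$, which requires careful $\lambda$-uniform tightness estimates on $\varphi_\lambda$ near $s=0$. The heart of the argument is the bulk regime, where by the mechanism underlying Theorem~\ref{thm:transition} the $\Bess$ phase SDE reduces, after an appropriate time change, to a perturbation of the $\Sineb$ phase SDE \eqref{eq:alpha}, and ergodic averaging of the oscillatory coefficient $g_\lambda$ along the rapidly rotating phase produces the logarithmic accumulation with constant $4/\beta$. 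This last step is also where the factor-of-two difference between the $\Bess$ CLT variance $1/(\beta\pi^2)$ and the $\Sineb$ CLT variance $2/(\beta\pi^2)$ emerges, via the Jacobian of the time change that turns one phase SDE into the other.
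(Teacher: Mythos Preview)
Your framework is the same as the paper's: represent $M_{a,\beta}(\lambda)$ via a phase diffusion $\varphi_{a,\lambda}$ (the paper does this in Theorem~\ref{M}, with normalization $M_{a,\beta}(\lambda)=\lim_{t\to\infty}\lfloor\varphi_{a,\lambda}(t)/4\pi\rfloor$), isolate the martingale part, and run a Dubins--Schwarz / martingale CLT on it once the quadratic variation is shown to be $\sim (16/\beta)\log\lambda$. So the skeleton is right. Two points, one a genuine gap and one an unnecessary complication.

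\textbf{The gap.} You write that the ``deterministic drift'' produces the centering with $O(1)$ error. But the drift in the actual SDE~(\ref{phi}) is not deterministic: besides $\beta\lambda e^{-\beta t/8}\,dt$ (which indeed integrates to $8\lambda$) there are the $\varphi$-dependent terms $\tfrac{\beta}{2}(a+\tfrac12)\sin(\varphi/2)\,dt$ and $\tfrac12\sin\varphi\,dt$. These are random, and showing that their time integrals on $[0,\tfrac{8}{\beta}\log\lambda]$ stay $O(1)$ is not automatic---it requires exactly the same rapid-oscillation mechanism you invoke for the quadratic variation. In the paper this is Proposition~\ref{prop:oscillation}: one writes $\varphi=\lambda H(t)+\mathcal{E}_t$ with $H(t)=8(1-e^{-\beta t/8})$, and an integration-by-parts against $e^{ic\lambda H(t)}$ shows $E\bigl|\int_0^s e^{ic\varphi}\,dt\bigr|\le M e^{\beta s/8}/\lambda$ uniformly in $s\le\tfrac{8}{\beta}\log\lambda$. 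This single estimate kills both the stray drift integrals \emph{and} the $\int\cos\varphi\,dt$ correction inside the quadratic variation. Your outline hides this under ``the deterministic drift should produce\ldots'', which is where the real work sits.

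\textbf{The over-complication.} The edge/transition/bulk split in $s$ is not needed and does not reflect the structure of the SDE: the $(a+\tfrac12)$-term is a bounded $\sin(\varphi/2)$ multiplier at all times, so there is no small-$s$ ``edge regime'' where it dominates. The paper's argument is cleaner: cut at the single time $T=\tfrac{8}{\beta}\log\lambda$; for $t>T$ the process $\varphi_{a,\lambda}(t+T)$ solves the same SDE with $\lambda$ replaced by $1$ and initial condition in $[0,4\pi]$, so $\varphi_{a,\lambda}(\infty)-\varphi_{a,\lambda}(T)$ is tight and vanishes after dividing by $\sqrt{\log\lambda}$. On $[0,T]$ Proposition~\ref{prop:oscillation} does everything at once. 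No ergodic theorem or reduction to the $\Sineb$ SDE is invoked; the averaging is just the elementary bound on $\int_0^s e^{ic\lambda H(t)}\,dt$.
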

A similar result except with limiting variance $\frac{2}{\beta \pi^2}$ was shown using a different method for the counting function of the $\Sineb$ process by Kritchevski, Valk\'o, and Vir\'ag \cite{KVV}.

The next result describes the large deviation behavior of the counting function. Before stating the result we introduce certain special functions that are used in the statement. We will use
\begin{align}
K(m)= \int_0^{\pi/2} \frac{dx}{\sqrt{1-m \sin^2 x}}, \quad \text{ and } \quad
E(m)= \int_0^{\pi/2} \sqrt{1-m\sin^2 x}\ dx, \label{KandE}
\end{align}
for the complete elliptic integrals of the first and second kind, respectively. Note that there are several conventions denoting these functions, we use the modulus notation from \cite{AbSt}.
 We also introduce the following  function for $m<1$:
\begin{align}
\label{defH}
\II(m)&= (1-m)K(m)-E(m).
\end{align}
\begin{theorem}
\label{thm:bess}
Fix $\beta>0$, $a>-1$.  The sequence of random variables $\frac{1}{ \lambda}M_{a,\beta}(\lambda)$ satisfies a large deviation principle with scale $\lambda^2$ and good rate function $\beta \bessldp (\rho)$ with
\begin{align}\label{defsineldp}
\bessldp (\rho)=  \frac{\nu}{2}+ \rho \II(\nu),\qquad \nu=\gamma^{(-1)}(\rho/4),
\end{align}
where $\gamma^{(-1)}$denotes the inverse of the continuous, strictly decreasing function given by 
\begin{align}
\gamma(\nu)= \begin{cases} \,\,\frac{\II(\nu)}{8} \int\limits_{-\infty}^\nu \II^{-2}(x)dx, & \textup{ if }\nu<0,\\[14pt]
\qquad\quad  \tfrac{1}{2\pi}, & \textup{ if } \nu=0,\\[14pt]
\,\, \frac{\II(\nu)}{8} \int\limits_{1}^\nu \II^{-2}(x)dx, & \textup{ if } 0< \nu< 1,\\[14pt]
\qquad\quad 0, &\textup{ if } \nu=1.
 \end{cases}\label{rhonu}
\end{align}
\end{theorem}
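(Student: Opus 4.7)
The plan is to mirror the large deviation analysis for the $\Sineb$ counting function (by Holcomb--Valk\'o) and adapt it to the hard edge. The starting point is a diffusion representation for $M_{a,\beta}(\lambda)$. Using the oscillation theory for $\mathfrak{G}_{\beta,a}$ from Ram\'{i}rez--Rider, one can introduce a phase function $\psi_\lambda(t)$ whose total winding counts eigenvalues in $[0,\lambda]$, giving $M_{a,\beta}(\lambda)=\tfrac{1}{2\pi}\psi_\lambda(\infty)$. I would then rescale time to put the problem in a small-noise framework: setting $s=t/\lambda^2$ and $\Psi_\lambda(s)=\psi_\lambda(\lambda^2 s)/\lambda$, the driving SDE acquires a drift of order one and a noise coefficient of order $\lambda^{-1}$, which is precisely the regime for an LDP at speed $\lambda^2$.

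Next I would apply a Freidlin--Wentzell / Girsanov style argument to obtain a path-space LDP for $\Psi_\lambda$ on $[0,\infty)$, with rate function of the form $\beta\int_0^\infty L(\Psi_s,\dot\Psi_s)\,ds$ for an explicit Lagrangian $L$ coming from the noise structure of the Bessel phase SDE. The LDP for $\lambda^{-1}M_{a,\beta}(\lambda)$ then follows from the contraction principle: minimize the path action over all paths whose terminal winding rate equals $\rho$. The Euler--Lagrange equation is an autonomous second-order ODE, so a first integral (the Hamiltonian) introduces the conserved quantity which becomes the parameter $\nu$ in the statement. An elliptic substitution reduces the minimizer's equations to integrals already computed for the $\Sineb$ LDP, expressible through the complete elliptic integrals (\ref{KandE}) and the function $\II$ defined in (\ref{defH}). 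This is what produces the closed form $\bessldp(\rho)=\nu/2+\rho\,\II(\nu)$ with $\nu=\gamma^{(-1)}(\rho/4)$; the factor of $4$ inside $\gamma^{(-1)}$ reflects exactly the $\frac{1}{4}$-rescaling in Theorem~\ref{thm:transition}, consistent with the fact that the $\Bess$ density $2/\pi$ is four times the $\Sineb$ density $1/(2\pi)$.

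The main obstacle is controlling the behavior near the hard edge at $t=0$, where the coefficients of the phase SDE are singular and the small-noise approximation breaks down. I would localize the argument by fixing a small cutoff $s_0>0$ and proving two estimates: (i) for $s\in[s_0,\infty)$, the Freidlin--Wentzell heuristic is justified rigorously by a $\Sineb$-type analysis, and (ii) the contribution of $s\in[0,s_0]$ to $\log\pr$ is $o(\lambda^2)$ uniformly, after which one lets $s_0\to 0$. The exponential tightness and upper bound in this initial window should follow from comparison with the Sturm-oscillation bounds for $\mathfrak{G}_{\beta,a}$, while the lower bound is a matter of producing an explicit tilted measure realizing the target density $\rho$ that crosses the edge region at subleading cost.

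Given that the variational calculation and the Euler--Lagrange integration are essentially those already carried out in the $\Sineb$ setting, the genuinely new content is the edge analysis in step (ii): all the probability of the rare event must accrue in the bulk-like region, and the edge must be shown to be invisible on scale $\lambda^2$. Once this is established, plugging the minimizer back into the action and changing variables via $\rho=4\gamma(\nu)$ yields the stated formula.
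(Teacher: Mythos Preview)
Your high-level strategy---establish a path LDP for a phase diffusion, contract to the endpoint, and reduce the variational problem to the $\Sineb$ computation via the factor of $4$---matches the paper's outline, and your reading of the relation $\bessldp(\rho)=32\sineldp(\rho/4)$ is correct. But the specific mechanism you propose for the path LDP has two genuine gaps.

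First, the rescaling $s=t/\lambda^2$ does not put the problem into a small-noise Freidlin--Wentzell form. After the change of variables of Theorem~\ref{M}, the phase $\varphi_{a,\lambda}$ satisfies (\ref{phi}); the dominant drift $\beta\lambda e^{-\beta t/8}$ integrates to $8\lambda$ over $[0,\infty)$ with essentially all the rise on a time interval of order $1$, not $\lambda^2$. The correct rescaling is in \emph{space only}: one studies $\varphi_{a,\lambda}(t)/\lambda$ on the original time axis. This does produce a noise coefficient of order $1/\lambda$, but that coefficient is $\tfrac{2}{\lambda}\sin(\varphi_{a,\lambda}/2)$, a function of $\lambda$ times the rescaled variable. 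It oscillates rapidly as $\lambda\to\infty$, so there is no fixed diffusion matrix to feed into a Lagrangian and the standard Freidlin--Wentzell hypotheses fail. The paper (following the $\Sineb$ analysis of \cite{DHBV}) handles this by exploiting the monotonicity of $\lfloor\varphi_{a,\lambda}\rfloor_{2\pi}$ and deriving exponential-moment bounds, via Girsanov, on the \emph{passage times} $\tau_k$ across successive $2\pi$-intervals (Step~1 of Section~5). These are then assembled into the path LDP through a sequence of approximations---truncation, replacement of the exponential drift by a piecewise constant one, and piecewise-linear interpolation (Step~2). This jump-time method is what effects the averaging over the fast oscillations; it is not a cosmetic repackaging of Freidlin--Wentzell but the essential analytic device.

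Second, your identification of the main obstacle is inverted. In the $\varphi$-coordinates of Theorem~\ref{M} there is no singularity at $t=0$; all coefficients are bounded there. The difficulty is the \emph{tail} $t\to\infty$, where the exponential drift has decayed and one must show that no additional winding accrues at scale $\lambda^2$ (Proposition~\ref{prop:phitail}). This is where the $\Bess$-specific work lies: the corresponding $\Sineb$ bound relied on a simple comparison that fails here because of the extra $\sin(\varphi/2)$ and $\sin\varphi$ drift terms in (\ref{phi}), and a separate argument---returning to the hyperbolic coordinates $X_1,X_2$ of the proof of Theorem~\ref{M}---is needed.
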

\noindent Roughly speaking, this means that the probability of seeing close to  $\rho  \lambda$ points in $[0,\lambda]$ for a large $\lambda$ is asymptotically $e^{-\lambda^2 \beta \bessldp (\rho)}$.

This result is closely related to the analogous result for the counting function $N_\beta(\lambda)$ of the $\Sineb$ process. There we consider the sequence $\frac{1}{\lambda} N_\beta(\lambda)$ and we find an LDP with rate $\lambda^2$ and rate function $\beta \sineldp$ where $\bessldp (\rho) = 32 \sineldp (\rho/4)$\cite{DHBV}.  Moreover we can check that the central limit theorem and the large deviation result are at least formally consistent. Lastly, observe that the large deviation result is also consistent with the tail behavior of the lowest eigenvalue $P(M_{a,\beta}(\sqrt \lambda) = 0) \sim e^{-\tfrac{\beta}{2} \lambda}$ \cite{RRZ}.

For a bit of clarification on why we see consistency between results on the bulk and hard-edge processes we introduce the following characterization of $\Bess$.

\begin{theorem}
\label{M}
Let $M_{a, \beta}(\lambda)$ be the number of points of $\Bess$ in the interval $[0,\lambda]$, and let $\varphi_{a,\lambda}$ be the diffusion that satisfies the stochastic differential equation
\begin{equation}
\label{phi}
d\varphi_{a,\lambda} = \frac{\beta}{2} (a+\tfrac{1}{2}) \sin \big( \frac{\varphi_{a,\lambda}}{2}\big) dt + \beta  \lambda e^{-\beta t/8} dt + \frac{\sin \varphi_{a,\lambda}}{2} dt + 2 \sin \big(\tfrac{\varphi_{a,\lambda}}{2}\big) dB_t
\end{equation}
with initial condition $\varphi_{a,\lambda}(0)=2\pi$.  Then
\[
 M_{a,\beta}(\lambda) =^d  \lim_{t\to \infty}   \left\lfloor  \frac{1}{4\pi}   \varphi_{a,\lambda}(t) \right\rfloor .
\]
Moreover, for $a>0$ we have that $\lim_{t\to \infty}  \lfloor(\varphi_{a,\lambda}(t)-2\pi)/4\pi \rfloor  = \lim_{t\to \infty}  \lfloor \varphi_{a,\lambda}(t)/4\pi \rfloor $ almost surely. 
\end{theorem}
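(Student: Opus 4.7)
The plan is to derive the SDE representation for $M_{a,\beta}(\lambda)$ from Ram\'irez and Rider's characterization of the hard edge process as the spectrum of the Stochastic Bessel operator $\mathfrak{G}_{\beta,a}$. Since $M_{a,\beta}(\lambda)$ counts singular values of $\mathfrak{G}_{\beta,a}$ in $[0,\lambda]$, equivalently eigenvalues of $\mathfrak{G}_{\beta,a}$ in $[0,\lambda^2]$, Sturm oscillation theory reduces the problem to tracking the phase of the solution $u$ to $\mathfrak{G}_{\beta,a}u = \lambda^2 u$ with Dirichlet condition at $0$. I would rewrite this eigenvalue equation in divergence form $-(pu')' = \lambda^2 w u$ with weight $p(x)=e^{-ax-(2/\sqrt\beta)b(x)}$ and density $w(x)=e^{-(a+1)x-(2/\sqrt\beta)b(x)}$, and pass to a Pr\"ufer-type polar representation of the pair $(u,pu')$ composed with a Liouville-type time change $x\mapsto t$ arranged so that the stochastic integrator becomes a standard Brownian motion $B_t$ and the eigenvalue-dependent forcing takes the form $\beta\lambda e^{-\beta t/8}$. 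The $\beta/8$ exponent (rather than the $\beta/4$ of the $\Sineb$ phase in \eqref{eq:alpha}) reflects the squaring transformation between eigenvalues and singular values, while the polar normalization should be chosen so that the phase $\varphi_{a,\lambda}$ advances by exactly $4\pi$ per zero of $u$, explaining the factor $4\pi$ in the floor.

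Applying It\^o's formula to this polar decomposition should deliver the SDE in \eqref{phi} term by term: the $\tfrac{\beta}{2}(a+\tfrac12)\sin(\varphi/2)$ drift arises from the Bessel weight $e^{-ax}$, the term $\beta\lambda e^{-\beta t/8}$ from the eigenvalue parameter after the time change, the $\tfrac12\sin\varphi$ term from the It\^o correction in the Pr\"ufer transformation, and the $2\sin(\varphi/2)\,dB_t$ from the Brownian coefficient $2/\sqrt\beta$ in $\mathfrak{G}_{\beta,a}$. The initial condition $\varphi_{a,\lambda}(0)=2\pi$ encodes the Dirichlet boundary at $x=0$ with the standard choice of branch for the polar angle. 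Sturm oscillation then identifies $M_{a,\beta}(\lambda)$ in distribution with $\lim_{t\to\infty}\lfloor\varphi_{a,\lambda}(t)/(4\pi)\rfloor$, completing the main claim.

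For the final $a>0$ statement I would use that the SDE becomes asymptotically autonomous once the forcing $\lambda e^{-\beta t/8}$ decays, and that every point of $2\pi\mathbb{Z}$ is absorbing for the autonomous dynamics because both the drift and the diffusion coefficient $2\sin(\varphi/2)$ vanish there. The issue is therefore which element of $2\pi\mathbb{Z}$ the phase reaches. A linearization of the autonomous drift at even multiples of $2\pi$ yields a positive (unstable) coefficient, so the phase cannot be absorbed there, and the combined effect of the forcing and the diffusion ensures it is pushed past every even multiple of $2\pi$ before absorption. Consequently $\varphi_\infty\in 2\pi+4\pi\mathbb{Z}$ almost surely, i.e.\ $\varphi_\infty\bmod 4\pi=2\pi$, and on this event $\lfloor(\varphi_\infty-2\pi)/4\pi\rfloor=\lfloor\varphi_\infty/4\pi\rfloor$.

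The main obstacle is the SDE derivation: keeping track of all coefficients through the combined Liouville and Pr\"ufer transformations, and in particular confirming that the Brownian motion driving \eqref{phi} is the same one appearing in $\mathfrak{G}_{\beta,a}$, requires careful bookkeeping. A secondary difficulty is the $a>0$ analysis, which is not purely a stability calculation because the equation is not time homogeneous; it requires combining a Lyapunov argument for the autonomous limit with a coupling controlling the residual effect of the decaying forcing term $\beta\lambda e^{-\beta t/8}$ near each candidate absorbing point.
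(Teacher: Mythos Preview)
Your plan for the main identity is essentially what the paper does, just entered from a slightly different door. The paper does not start from the eigenvalue ODE and build a Pr\"ufer angle directly; it quotes the Ram\'irez--Rider Riccati diffusion $p_\lambda$ (Theorem~\ref{SDEchar}) and then performs two explicit changes of variables: first $p\mapsto X$ via a logarithm on each sign of $p$ (equations \eqref{eq:x1}--\eqref{eq:x2}), and then $X\mapsto\varphi$ via $\varphi=\pm 4\arctan e^{\pm X}$. Since the Riccati variable is the (co)tangent of the Pr\"ufer angle, your Liouville--Pr\"ufer program and the paper's Riccati-plus-substitution route are the same computation; the paper's version is shorter because the Riccati SDE is taken off the shelf rather than rederived. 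One thing to flag in your write-up: the ``same'' Brownian motion is not literally the $b(x)$ in $\mathfrak G_{\beta,a}$ but its image under the time change, so the claim should be that the driving noise in \eqref{phi} is a standard Brownian motion, not that it coincides with $b$.

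The part that does not go through as written is the $a>0$ clause. Your linearization of the autonomous drift near $\varphi\in 4\pi\mathbb Z$ gives a positive coefficient for all $a>-1$ (indeed the linearized drift coefficient is $\tfrac{\beta}{4}(a+\tfrac12)+\tfrac12$), so it cannot be what singles out $a>0$. More structurally, the points of $2\pi\mathbb Z$ are not absorbing in finite time for the diffusion \eqref{phi}: the forcing $\beta\lambda e^{-\beta t/8}$ is strictly positive at every finite $t$, so $\varphi$ passes through each such point with positive speed, and in the autonomous limit the boundaries are natural rather than exit. The paper does not attempt a stability or Lyapunov argument here at all; it invokes the Ram\'irez--Rider erratum, which states that for $a>0$ the number of zeros of $p_\lambda$ equals the number of explosions to $-\infty$. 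Translated to $\varphi$, every crossing of a level in $4\pi\mathbb Z$ is followed by a crossing of the next level in $2\pi+4\pi\mathbb Z$, which is exactly the statement $\lfloor(\varphi_\infty-2\pi)/4\pi\rfloor=\lfloor\varphi_\infty/4\pi\rfloor$. If you want to avoid citing the erratum, the honest route is a boundary classification for the Riccati on $\{p<0\}$ (equivalently for $\varphi$ on a $2\pi$-interval), where the It\^o drift of $\log(-p)$ is $a+(-p)$ and hence strictly positive precisely when $a>0$; that, not the linearization of \eqref{phi}, is where the threshold lives.
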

We also make the observation that for a fixed $\lambda$ the $\alpha_\lambda$ diffusion used in the characterization of the bulk process satisfies the SDE
\[
d\alpha_\lambda = \lambda \frac{\beta}{4} e^{-\frac{\beta}{4}t} dt + 2 \sin \left( \frac{\alpha_\lambda}{2} \right) dB_t, \qquad \alpha_\lambda(0)=0,
\]
where $B_t$ is a standard Brownian motion. The Brownian motion $B_t$ that appears depends on the $\lambda$ parameter.

Notice now that for $\lambda$ large the $\varphi_{a,\lambda}$ diffusion will be rapidly increasing until time on the order of $\log \lambda$. On this region the finite variation terms involving $\sin ( \frac{\varphi_{a,\lambda}}{2})$ and $\sin \varphi_{a,\lambda}$ will be rapidly oscillating and so have a minimal contribution. Essentially these terms are not felt in the $\lambda \to \infty$ limit and so they vanish in asymptotic results. The results on oscillatory integrals involving  $\varphi_{a,\lambda}$ will turn out to be the key component in the proof of all 3 main results and will be given in section 2. 

It is worth noting that from the characterization in Theorem \ref{M} it seems likely that one could show other results for $\Bess$ related to existing results on the $\Sineb$ process. In particular we anticipate it would not be difficult to determine the asymptotic probability of overcrowding ($P(M_{a,\beta}(\lambda) \geq n) \sim ?$ as $n\to \infty$, see \cite{DHBV2}).

The remainder of the paper will be organized as follows: Section 2 will give the proof of Theorem \ref{M} as well as several results on the $\varphi_{a,\lambda}$ diffusion. Section 3 will give the proof of the transition from $\Bess$ to $\Sineb$. Section 4 will give the proof of the central limit theorem.  Section 5 will give the proof of the large deviation result.

\bigskip

\noindent{\bf Acknowledgements:} The author would like to thank Benedek Valk\'o for helpful comments and corrections.

\section{The counting function of $\Bess$}

Before giving the proof of Theorem \ref{M} we recall an existing description of $\Bess$ that characterizes the process via diffusions rather then an operator.  We consider the `Riccati diffusion' for $\mathfrak{G}_{\beta,a}$, given by the stochastic differential equation
\begin{equation}
d p_\lambda(t) = \frac{2}{\sqrt \beta} p_\lambda(t) dB(t) + \left( \left( a + \tfrac{2}{ \beta}\right) p_\lambda(t) - p_\lambda^2(t) - \lambda e^{-t} \right) dt,
\end{equation}
with initial condition $p(0)=+\infty$, which it leaves instantaneously.  Note that there is a positive probability of explosion to $-\infty$.

\begin{theorem}[\cite{RR}]
\label{SDEchar}
Let $\Lambda_0(\beta,a)< \Lambda_1(\beta,a)<...$ be the ordered eigenvalues of $\mathfrak{G}_{\beta,a}$, and let $P_{\infty, t}$ denote the law induced by $p(\cdot: \beta, a, \lambda)$ started at $+\infty$ at time $t$, and restarted at $+\infty$ and time $\mathfrak{m}$ upon any $\mathfrak{m}< \infty, p(\mathfrak{m}) = -\infty$.  Then,
\begin{align}
P(\Lambda_0(\beta,a)>\lambda) & = P_{\infty, 0}(p \text{ never hits } 0 ),\\
P(\Lambda_k(\beta,a)< \lambda) & = P_{\infty,0}(p \text{ hits 0 at least } k+1 \text{ times}).
\end{align}

\end{theorem}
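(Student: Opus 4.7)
The plan is to identify $\varphi_{a,\lambda}$ with a phase associated to the Riccati diffusion $p_\lambda$ of Theorem~\ref{SDEchar}, translating the zero-hits of $p_\lambda$ (which by Theorem~\ref{SDEchar} count the points of $\Bess$ in $[0,\lambda]$) into $4\pi$-increments of $\varphi_{a,\lambda}$. Concretely, I would first perform the deterministic time change $s=\beta t/8$ on the Riccati SDE so that the forcing $-\lambda e^{-s}$ is rewritten as a multiple of $e^{-\beta t/8}$, matching the exponent in \eqref{phi}, and call the rescaled diffusion $\tilde p(t)$. Then introduce a phase by the substitution $\tilde p = G(\varphi)$ with $G(\varphi)=\cot((\varphi-2\pi)/4)$ (or an equivalent smooth bijection of period $4\pi$). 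This map sends the initial condition $\tilde p(0)=+\infty$ to $\varphi(0)=2\pi$; as $\varphi$ increases through a full period of length $4\pi$, the process $\tilde p$ traces one excursion $+\infty \searrow 0 \searrow -\infty$, with the crossing $\tilde p = 0$ occurring precisely when $\varphi$ crosses a multiple of $4\pi$ (first at $4\pi$, next at $8\pi$, and so on).

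Next, I would apply It\^o's formula to $\varphi=G^{-1}(\tilde p)$ and collect terms using the trigonometric identities $p/(1+p^2)=\tfrac{1}{2}\sin((\varphi-2\pi)/2)$ and $p^2/(1+p^2)=\cos^2((\varphi-2\pi)/4)$, together with $\sin(x+\pi)=-\sin x$ and $\sin(2x)=2\sin x\cos x$, to rewrite everything in the variables $\sin(\varphi/2)$, $\sin\varphi$, and the exponential forcing. The three drift contributions --- the Riccati linear term $(a+2/\beta)\tilde p$, the quadratic term $-\tilde p^2$ combined with the It\^o correction, and the exponential term --- should assemble exactly into the three drift terms of \eqref{phi}. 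The driving Brownian motion $B_t$ in \eqref{phi} emerges as the (appropriately signed) time-change of the Brownian motion driving $p_\lambda$. The identification
\[
 M_{a,\beta}(\lambda) \;\stackrel{d}{=}\; \lim_{t\to\infty}\Big\lfloor\tfrac{\varphi_{a,\lambda}(t)}{4\pi}\Big\rfloor
\]
then follows from Theorem~\ref{SDEchar}, since each zero-hit of $\tilde p$ contributes exactly one unit to this count.

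For the second assertion when $a>0$, it suffices to show that $\varphi_{a,\lambda}(\infty)$, reduced modulo $4\pi$, lies almost surely in $[2\pi,4\pi)$, so that shifting by $2\pi$ cannot change the integer part divided by $4\pi$. Once $t$ is large enough that $\beta\lambda e^{-\beta t/8}$ is negligible, the dynamics are governed by an autonomous SDE whose drift factors as $\sin(\varphi/2)\big[\tfrac{\beta}{2}(a+\tfrac12)+\cos(\varphi/2)\big]$ and whose diffusion coefficient is $2\sin(\varphi/2)$; both vanish exactly on $2\pi\mathbb{Z}$. Linearizing near these fixed points produces a geometric-Brownian-motion-type equation whose log-modulus has constant drift, and for $a>0$ the sign of this drift makes $\varphi=(4k+2)\pi$ attracting while $\varphi=4k\pi$ is repelling. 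A standard strong Markov argument then excludes accumulation at the repelling fixed points, giving the stated identity of floors.

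The principal obstacle is the organization of the It\^o calculation in the middle paragraph: the target SDE contains three separate drift terms and one diffusion term, and the single ansatz for $G$ together with the time change must simultaneously reproduce all four coefficients. The algebra is routine but somewhat delicate, with no margin for sign or scale errors, so most of the work is in verifying the trigonometric identities that turn $p$-expressions into $\varphi$-expressions. The stability analysis for the moreover step is standard in spirit but requires care at the boundary $a=0$, where the coefficient governing the attracting/repelling dichotomy degenerates.
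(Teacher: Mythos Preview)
Your proposal does not prove the stated theorem. Theorem~\ref{SDEchar} is the Ram\'irez--Rider characterization of the hard-edge spectrum via the Riccati diffusion $p_\lambda$; the paper does not prove it but quotes it from \cite{RR}. Your write-up explicitly \emph{invokes} Theorem~\ref{SDEchar} as an input (``which by Theorem~\ref{SDEchar} count the points of $\Bess$'') and then derives the $\varphi_{a,\lambda}$ description and the floor identity. That is a proof of Theorem~\ref{M}, not of Theorem~\ref{SDEchar}. A proof of Theorem~\ref{SDEchar} itself would require the operator-theoretic analysis of $\mathfrak{G}_{\beta,a}$ carried out in \cite{RR} (oscillation theory for the associated eigenvalue problem, which is entirely absent from your outline).

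Taking your text instead as an attempt at Theorem~\ref{M}, the overall strategy matches the paper's: pass from $p_\lambda$ to a phase variable by It\^o calculus and read off zero-crossings as $4\pi$-increments. The paper, however, does this in two stages and two regimes: it first sets $-X_1(t)=\log p_\lambda(\beta t/4)+\beta t/8-\tfrac12\log\lambda$ on $\{p_\lambda>0\}$ (and an analogous $X_2$ on $\{p_\lambda<0\}$), obtaining a $\cosh$-drift SDE, and then sets $\varphi=\mp 4\arctan e^{\mp X_i}$. Your single ansatz $\tilde p=\cot((\varphi-2\pi)/4)$ after the time change $s=\beta t/8$ omits the time-dependent scale factor $\sqrt\lambda\,e^{-\beta t/8}$ that the paper builds into the $p\to X$ step; without it the forcing $-\lambda e^{-t}$ will not collapse to the clean $\beta\lambda e^{-\beta t/8}\,dt$ term of \eqref{phi}, and the quadratic and It\^o-correction terms will not combine as you claim. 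Also note the paper's time change is $\beta t/4$, not $\beta t/8$. So the plan is right but the specific substitution needs the extra exponential factor (equivalently, first pass through the $X$ variable).

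For the ``moreover'' clause, the paper simply cites the erratum \cite{RRerr}, which shows that for $a>0$ every zero of $p_\lambda$ is followed by an explosion to $-\infty$; this forces $\varphi$ past the next odd multiple of $2\pi$ after each even one. Your linearization/stability argument is a reasonable self-contained substitute, but be aware that the delicate case is exactly the transient regime where the exponential forcing is still present, not only the autonomous $t\to\infty$ limit you analyze.
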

\noindent In other words the counting function of the process is the number of times that $p_\lambda(t)$ hits 0, and may be denoted by $M_{a,\beta}(\sqrt \lambda)$, where $M_{a,\beta}$ is the counting function of $\Bess$.

\begin{proof}[Proof of Theorem \ref{M}]

The characterization of the hard edge process given in Theorem \ref{SDEchar} can be rewritten in the following way: 
For $p_\lambda>0$ apply the change of variables $-X_1(t) := \log (p_\lambda(\beta t/4))+ \beta t / 8 - \log \lambda / 2$ for $p_\lambda>0$.  Then $X_1$ satisfies the SDE
\begin{equation}
\label{eq:x1}
d X_1(t) = \left( \frac{\beta}{4} (-a- \tfrac{1}{2}) + \frac{\beta}{2} \sqrt \lambda e^{-\beta t/ 8} \cosh X(t)\right) dt - dB(t).
\end{equation}
The initial condition $p(0) = +\infty$ gives $X_1(0)= -\infty$, moreover when $p_\lambda(\beta t/4)$ reaches $0$ we get that $X_1(t) = +\infty$.

We do a similar change of variables for $p_\lambda<0$. Take $X_2(t) =  \log (-p_\lambda(\beta t/4))+ \beta t/8 - \log \lambda/2$.  This gives us 
\begin{equation}
\label{eq:x2}
d X_2(t) = \left( \frac{\beta}{4} (a +\tfrac{1}{2}) + \frac{\beta}{2} \sqrt \lambda e^{-\beta t/ 8} \cosh X_2(t)\right) dt + dB(t).
\end{equation}
The boundary condition $p_\lambda(\beta t/4)=0$ gives $X_2(t) = -\infty$, and for $p_\lambda(\beta t/4) = - \infty$ we get $X_2(t) = + \infty$.

To find the $\varphi_{a,\lambda}$ diffusion given in Theorem \ref{M} we work back from $X_1$ and $X_2$ to $\varphi_{a,\sqrt{\lambda}}$. Notice that the zeros of $p_\lambda$ describe the eigenvalue process of $\mathfrak{G}_{\beta,a}$ and so the resulting diffusion have parameter $\sqrt{\lambda}$. Suppressing the subscripts $a,\sqrt \lambda$, let $\varphi = -4 \arctan e^{-X_1}$, then we get
\begin{align*}
d \varphi &= 2 \sech X_1 d X_1 - \sech X_1 \tanh X_1 dt\\
& =   \frac{\beta}{2}(a+\tfrac{1}{2}) \sin \big( \frac{\varphi}{2} \big)  dt +  \beta \sqrt \lambda e^{-\beta t/ 8} dt   +\frac{ \sin \varphi }{2} dt  +  2 \sin \big( \frac{\varphi}{2} \big) dB_t.
\end{align*}
The conditions $X_1 = -\infty$ and $X_1 = +\infty$ become $\varphi =-2\pi$ and $\varphi =0$ respectively.  For $X_2$ take $\varphi = 4 \arctan e^{X_2}$.  This gives 
\begin{align*}
d\varphi & = \frac{\beta}{2} (a+\tfrac{1}{2}) \sin \big( \frac{\varphi}{2}\big) dt + \beta \sqrt \lambda e^{-\beta t/8} dt + \frac{\sin \varphi}{2} dt + 2 \sin \big(\frac{\varphi}{2}\big) dB_t.
\end{align*}
The conditions $X_2 = -\infty$ and $X_2 = + \infty$ become $\varphi =0$ and $2\pi$ respectively.

Now notice that this diffusion is invariant under $4\pi$ spacial shifts, so for a fixed $\lambda$ with initial condition $\varphi_{a,\sqrt{\lambda}}(0)=2\pi$ we have
\[
P(\sup_t \varphi_{a,\sqrt \lambda}(t) \geq 4\pi k) = P(\Lambda_k < \lambda).
\]
 Lastly we use that $\lfloor\varphi_{a,\sqrt \lambda}\rfloor_{4\pi}$ is monotone nondecreasing in $t$ to rewrite the supremum as a limit. 

For the final statement when $a>0$ we appeal to the erratum for the original convergence result on the hard edge, \cite{RRerr}. Observe that in this case counting $0$ of the $p_\lambda$ diffusion is equivalent to counting explosions to $-\infty$. Therefore any time $\varphi_{a,\lambda}$ passes a multiple of $4\pi$ is must pass the next $2\pi$ multiple as well.
\end{proof}

It will be useful to consider what is the relationship between two diffusions that satisfy stochastic differential equations of the form (\ref{phi}) with two different $\lambda$s which are coupled through their noise terms. Let $\psi_{a,\lambda,x}= \varphi_{a,\lambda+x}- \varphi_{a,\lambda}$, then the SDE for $\psi_{a,\lambda,x}$ is
\begin{align}
d \psi_{a,\lambda,x} &= \frac{\beta}{2}(a+1/2)\im \left[e^{ i\frac{ \varphi_{a,\lambda}}{2}}\left(e^{- i\frac{\psi_{a,\lambda,x}}{2}}-1\right)\right]dt  + \frac{1}{2}\im \left[e^{ i\varphi_{a,\lambda}}\left(e^{- i\psi_{a,\lambda,x}}-1\right)\right]dt \notag\\
&\hspace{1.5cm} + \beta x e^{-\beta t/8}dt
+\im \left[e^{ i\frac{ \varphi_{a,\lambda}}{2}}\left(e^{- i\frac{\psi_{a,\lambda,x}}{2}}-1\right)\right]dB_t,
\label{psilambda}
\end{align} 
with initial condition $\psi_{a,\lambda,x}(0)=0$. This follows from standard It\^o techniques together with the application of angle addition formulas. This rather ugly formula can be made more palatable by the observation that the oscillatory terms may be well controlled.

\begin{proposition}
\label{prop:oscillation}
Let $\varphi_{a,\lambda}$ and $\psi_{a,\lambda,x}$ be defines as above, then for $T\leq \frac{8}{\beta} \log \lambda$ there exists a constants $M$ and $\gamma$ (uniform in $\lambda$ and $T$) such that 
\begin{align}
&E \left|\sup_{0\le s \le T}\int_0^{s} e^{i c\varphi_{a,\lambda}} dt \right| \le \frac{ M }{\lambda \mathfrak{h}(T)}, \quad \text{ and } \label{eq:oscillation1}\\
& E \left|\sup_{0\le s \le T}\int_0^{s}e^{i c\varphi_{a,\lambda}} \left( e^{-i c\psi_{a,\lambda,x}}-1\right)dt\right| \le \frac{ M }{\lambda \mathfrak{h}(T)}. \label{eq:oscillation2}
\end{align}
In particular in the case where $T$ is fixed this gives $\sup_{0\le s \le T}\int_0^{s} \sin\left(c\varphi_{a,\lambda}\right) dt \to 0$ in $L_1$ (and hence in probability) as $\lambda \to \infty$ (and similarly for $\cos(c \varphi_{a,\lambda})$ and integrals related to (\ref{eq:oscillation2})).
Moreover
\begin{align}
P \left( \sup_{0 \le s \le T} \int_0^T e^{i c\varphi_{a,\lambda}}dt - \frac{M}{\lambda \mathfrak{h}(T)} \ge C\right) \le \exp \left[ - C^2 \lambda^2 \gamma e^{-\frac{\beta}{4}T} \right].
\label{eq:oscillation3}
\end{align}
A similar bound on the tail of the integral appearing in (\ref{eq:oscillation2}) also holds.
\end{proposition}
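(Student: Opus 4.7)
The strategy is integration by parts in Itô form: choose an auxiliary function $F(\varphi,t)$ whose Itô differential produces the oscillatory integrand we wish to bound as its main term. The heuristic is that because of the dominant drift $g(t) := \beta \lambda e^{-\beta t/8}$ in (\ref{phi}), the process $\varphi_{a,\lambda}$ cycles rapidly through multiples of $2\pi$, so that $e^{ic\varphi_{a,\lambda}}$ averages to zero; dividing by $g(t)$ captures exactly this cancellation.

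For (\ref{eq:oscillation1}) I would take $F(\varphi,t)=e^{ic\varphi}/(ic\, g(t))$. Applying Itô's formula to $F(\varphi_{a,\lambda}(t),t)$ and using (\ref{phi}), the term $\partial_\varphi F\cdot g(t)\,dt$ produces $e^{ic\varphi_{a,\lambda}}\,dt$, while every remaining contribution (the finite-variation drifts $\tfrac{\beta}{2}(a+\tfrac12)\sin(\varphi/2)$ and $\tfrac12\sin\varphi$, the $\partial_t F$ term, and the Itô correction $\tfrac12\partial^2_{\varphi\varphi}F\cdot 4\sin^2(\varphi/2)$) is pointwise of order $1/g(t)$, and the martingale part is $R(t)\,dB_t$ with $|R(t)|\le 2/g(t)$. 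Rearranging gives
\begin{equation*}
\int_0^s e^{ic\varphi_{a,\lambda}}\,dt = \bigl[F(s)-F(0)\bigr] - \int_0^s \Phi(t)\,dt - \int_0^s R(t)\,dB_t,
\end{equation*}
with $|F(s)|\le 2/(c\,g(s))$ and $|\Phi(t)|\lesssim 1/g(t)$. Taking the supremum over $s\le T$ and expectations, the boundary and drift contributions are $O(1/g(T))=O(e^{\beta T/8}/\lambda)$, while BDG applied to the martingale gives $E\sup_s|\int_0^s R\,dB|\lesssim \bigl(\int_0^T 4/g(t)^2\,dt\bigr)^{1/2}=O(e^{\beta T/8}/\lambda)$. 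Identifying $\mathfrak{h}(T)\asymp e^{-\beta T/8}$ yields (\ref{eq:oscillation1}).

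For (\ref{eq:oscillation2}) the same idea is applied to the two-variable function $F(\varphi,\psi,t)=e^{ic\varphi}(e^{-ic\psi}-1)/(ic\,g(t))$, using (\ref{phi}) for $\varphi_{a,\lambda}$ and (\ref{psilambda}) for $\psi_{a,\lambda,x}$. Once again the only surviving main term on the right-hand side is $e^{ic\varphi_{a,\lambda}}(e^{-ic\psi_{a,\lambda,x}}-1)\,dt$; every other contribution carries either a factor $1/g(t)$ (using $|e^{-ic\psi}-1|\le 2$ and $|\partial_\psi F|=|e^{ic(\varphi-\psi)}|/g(t)$) or is $O(x/\lambda)$ coming from the $\beta x e^{-\beta t/8}\,dt$ drift of $\psi$. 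Moreover $F(\cdot,\cdot,0)=0$ because $\psi_{a,\lambda,x}(0)=0$, so only the boundary term at $t=s$ survives. Summing as before gives (\ref{eq:oscillation2}) with the same $\mathfrak{h}(T)$.

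For the tail bound (\ref{eq:oscillation3}), the deterministic contributions are already bounded in absolute value by $M/(\lambda\mathfrak{h}(T))$, so one only needs a concentration estimate for the stochastic term $M_s=\int_0^s R(t)\,dB_t$. Since the integrand satisfies $|R(t)|\le 2/g(t)$ deterministically, the standard exponential martingale inequality gives
\begin{equation*}
P\!\left(\sup_{0\le s\le T}|M_s|\ge C\right)\le 2\exp\!\left(-\frac{C^2}{2\int_0^T 4/g(t)^2\,dt}\right)\le \exp\!\left(-C^2\lambda^2\gamma\, e^{-\beta T/4}\right)
\end{equation*}
for a suitable constant $\gamma$, which is precisely the required form. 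The main difficulty is the bookkeeping in step two, where the coupled SDEs for $\varphi_{a,\lambda}$ and $\psi_{a,\lambda,x}$ produce a number of mixed quadratic variation and cross terms; the key observation that keeps the argument clean is that each such term is either oscillatory in $\varphi$ or $\varphi-\psi$ multiplied by $1/g(t)$, or else comes with an explicit factor of $1/\lambda$.
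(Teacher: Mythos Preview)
Your approach is correct and is essentially the same mechanism as the paper's: an Itô integration by parts that converts the oscillatory integral into a boundary term, a residual drift integral, and a martingale, each carrying a factor of order $1/g(t)$; the tail bound then follows from the exponential (Gaussian) martingale inequality, exactly as you describe.

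The only difference is organizational. The paper first splits $\varphi_{a,\lambda}=\lambda H(t)+\mathcal{E}_t$ with $H(t)=8(1-e^{-\beta t/8})$, writes $e^{ic\varphi}=e^{ic\lambda H}e^{ic\mathcal{E}}$, and integrates by parts against the deterministic oscillation $e^{ic\lambda H}$; this produces the antiderivative $\Lambda(t)=\int_0^t e^{ic\lambda H(s)}\,ds$, which is then bounded by a second (classical) integration by parts to get $|\Lambda(t)|\le \tfrac{2}{\lambda c\beta}e^{\beta t/8}$. Your choice $F(\varphi,t)=e^{ic\varphi}/(ic\,g(t))$ builds this $1/g$ factor in directly and so bypasses the separate estimate on $\Lambda$; in effect your $F$ is the leading term of the paper's $e^{ic\mathcal{E}}\Lambda$. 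For part (\ref{eq:oscillation2}) the paper applies the same IBP with a product test function $u(t)V(Z)W(Y)$, which is exactly your two-variable $F(\varphi,\psi,t)$. Your route is slightly more streamlined; the paper's has the minor advantage that the ``antiderivative'' $\Lambda$ is deterministic, so the Itô bookkeeping is confined to the slower $\mathcal{E}$-process.
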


\begin{proof}[Proof of Proposition \ref{prop:oscillation}]
We write $\varphi_{a,\lambda}(t)$ in its integrated form (dropping the subscripts)
\[
\varphi(t)  = \lambda 8\left[1-e^{-\frac{\beta}{8}t}\right] + 2\pi +\frac{\beta}{2} (a+1/2) \int_0^t \sin \big( \frac{\varphi}{2}\big) ds +\int_0^t \frac{\sin (\varphi)}{2} ds + 2 \int_0^t\sin \big(\tfrac{\varphi}{2}\big) dB_s.
\]
We break this into two pieces, the first term we will write $\lambda H(t)=\lambda 8\left[1-e^{-\frac{\beta}{8}t}\right]$ and the remaining terms will be grouped together as the process $\mathcal{E}_t= \varphi(t)-\lambda H(t)$.
Then
\begin{align*}
\int_0^T e^{i c \varphi_{a,\lambda}}dt &= \int_0^T e^{i c \lambda H(t)} e^{i c \mathcal{E}_t}dt.
 \end{align*}
 We use the following version of It\^o's formula to extract the main term. Let $f,g$ be continuously differentiable functions and let $G$ denote the antiderivative of $g$. Then for $X$ and It\'o process we have 
 \[
 \int_0^T f'(t)G(X)dt = f(T)G(X)-f(0)G(0)- \int_0^T f(t)g(X)dX- \frac{1}{2} \int_0^T f(t)g'(X)d[X]_t.
 \]
Let $\Lambda(t) = \int_0^t e^{ic \lambda H(t)} dt$, then 
 \begin{align*}
 \int_0^T e^{i c \lambda H(t)} e^{i c \mathcal{E}_t} dt& = e^{i c \mathcal{E}_t}\Lambda(T)+ \int_0^T \Lambda(t) i c e^{i c \mathcal{E}_t}d\mathcal{E}_t - \frac{1}{2}\int_0^T \Lambda(t)c^2 e^{i c \mathcal{E}_t} d[\mathcal{E}]_t.
 \end{align*}
 Now observe that $\Lambda(t)$ may be bounded in the following way:
\begin{align*}
\int_0^{t} e^{i c\lambda  H(s)}ds  &= \int_0^{t} \frac{1}{\lambda i c H'(s)} \frac{d}{ds} e^{ic \lambda  H(s)}ds= \frac{ e^{\frac{\beta}{8}t}}{\lambda i c \beta}e^{i c\lambda  H(t)} - \frac{1}{8\lambda c } \int_0^{t} e^{\frac{\beta}{8}s}  e^{i c\lambda  H(s)}ds.
\end{align*}
In absolute value the final integral is bounded by $\frac{1}{\lambda c \beta}(e^{\frac{\beta}{8}}t-1)$ which gives us that 
\begin{equation}
\label{eq:GammaBound}
|\Lambda(t)| = \left|\int_0^{t} e^{ic \lambda  H(s)}ds\right| \le \frac{2}{\lambda c \beta} e^{\frac{\beta}{8}t}.
\end{equation}
From this we get that the $dt$ terms in the $d \mathcal{E}_t$ integral may be bounded in absolute value by $[\frac{\beta}{2}(a+1/2)+ 1/2]\int_0^T \Lambda(t) dt  \le [\frac{\beta}{2}(a+1/2)+ 1/2]\frac{16}{\lambda c \beta^2} e^{\frac{\beta}{8}T}$. Similar computation shows that the $d[\mathcal{E}]_t$ term is bounded in absolute value by $\frac{64}{\lambda c \beta^2} e^{\frac{\beta}{8}T}$. Lastly, for the martingale term we break it into its real and complex parts and use Doob's martingale inequality on the associated exponential submartingales. We show the argument for the imaginary part. The complex part may be done the same way. Let
\begin{align}
N_t &= 2 \int_0^t \Lambda(s) \sin\big( c \mathcal{E}_s\big) \sin \big(\tfrac{\mathcal{E}_s + \lambda H(s)}{2}\big)dB_s,
\end{align}
then $N_t$ is a true martingale because it has $L^1$ bounded quadratic variation. Therefore $\exp( \xi N_t)$ is a positive submartingale and so $P( \sup_{0 \le t \le T} \exp( \xi N_t) \ge x) \le E(\exp( \xi N_T))/x$. From this we get that
\begin{equation}
P\left( \sup_{0 \le t \le T} N_t \ge C\right) \le e^{-\xi C} E\left(\exp( \xi N_T)\right) 
\end{equation}
To compute $E \exp( \xi N_t)$ we make use of the martingale $M_t = \exp ( \xi N_t - \frac{\xi^2}{2}[N]_t)$. This gives us that 
\begin{align}
1 = EM_t  \ge E\left[\exp \left( \xi N_t - \frac{\xi^2}{2}\frac{1}{\beta}(\frac{8}{\lambda c \beta}e^{\frac{\beta}{8} t})^2\right)\right] = E(\exp(\xi N_t)) \exp \left[- \frac{\xi^2}{2}\frac{1}{\beta}(\frac{8}{\lambda c \beta}e^{\frac{\beta}{8} t})^2\right],
\end{align}
where we use
\begin{equation}
[N]_t \le  \int_0^t 4 \Lambda^2(t)dt \le \frac{1}{\beta}(\frac{8}{\lambda c \beta}e^{\frac{\beta}{8} t})^2.
\end{equation}
Optimizing our choice of $\xi$ we get 
\begin{align}
P\left( \sup_{0 \le t \le T} N_t \ge C\right) \le \exp \left[- \frac{C^2}{2} \beta \left( \frac{\lambda c \beta}{8}e^{-\frac{\beta}{8}T}\right)^2  \right].
\end{align}
This gives the necessary bound for (\ref{eq:oscillation3}). Integrating in the $C$ variable gives
\begin{align}
E(\sup_{0 \le t \le T} N_t ) \le \sqrt{ \frac{\pi}{2\beta }} \frac{8}{\lambda c \beta}e^{\frac{\beta}{8} T},
\end{align}
 which completes the proof of equation (\ref{eq:oscillation1}). 

To extend this to the case where we have the additional $ e^{i c\psi_{\lambda,f,g}}-1$ multiplier in the integral we use the same decomposition of $\xi_{f,a,\lambda,0}$ and work with the integral. 
\[
\int_0^{T}\sin(c \lambda H(t)) \cos (c \mathcal{E}_t) \left( \cos(c\psi_{\lambda,f,g})-1\right)dt.
\]
An application of a slightly modified It\'o's lemma leads us to the same type of analysis as before and give the necessary bounds for (\ref{eq:oscillation2}). In particular for $u,v,w$ continuously differentiable functions with $V,W$ the antiderivatives of $v$ and $w$ we get 
\begin{align*}
\int_0^T u'(t) V(Z)W(Y) dt & = u(T)V(Z_T)W(Y_T)- u(0)V(Z_0)W(Y_0) - \int_0^T u(t)v(Z)W(Y)dZ \\
& \qquad - \int_0^T u(t)V(Z)w(Y)dY - \frac{1}{2} \int_0^T u(t)v'(Z)W(Y)d[Z] \\
& \qquad - \frac{1}{2} u(t)V(Z)w'(Y)d[Y]- \int_0^T u(t)v(Z)w(Y)d[Z,Y].
\end{align*}
All of the finite variation terms the integrand may be bounded in absolute value by $\tilde M\Lambda(t)$ for some constant $\tilde M$. The martingale terms may be handled the same way as before.
\end{proof}

\section{From the hard edge to the bulk}
\label{sec:transition}

In order to show the transition we need the following two results on limits of martingales and stochastic integrals:
\begin{proposition}[Multidimensional Martingale CLT]
\label{prop:martingaleCLT}
Let $\{ M_n(\cdot)\}$ be a sequence of $\rr^d$ valued martingales. Suppose 
\[
\lim_{n\to \infty} E\big[ \sup_{s\leq t} |M_n(s)-M_n(s-)|\big] = 0 
\]
and $[M_n^i, M_n^j]_t \to c_{i,j}(t)$ in probability for all $t\geq 0$ where $C= [c_{i,j}]$ is a continuous, symmetric matrix valued function on $[0,\infty)$ with $C(0)=0$ and 
\[
\sum_{i,j \le n} (c_{ij}(s)-c_{ij}(t)) \xi_i\xi_j>0, \quad \text{ for } \xi \in \rr^n, \quad t>s\ge 0.
\] 
Then $M_n \to M$, where $M$ is a Gaussian process with independent increments and $E[M(t)M(t)^T] = C(t)$.
\end{proposition}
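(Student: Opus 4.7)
The plan is the classical two-step approach to martingale CLTs: first establish tightness of $\{M_n\}$ in the Skorohod space $D_{\rr^d}[0,\infty)$, then identify every subsequential limit as the unique centered Gaussian process with continuous paths, independent increments, and covariance $C$.

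For tightness, I would argue coordinate-by-coordinate. Each real-valued martingale $M_n^i$ has quadratic variation $[M_n^i]_t \to c_{ii}(t)$ in probability, with $c_{ii}$ continuous and nondecreasing. By the Aldous--Rebolledo criterion for semimartingales (e.g.\ Ethier--Kurtz, Theorem 7.1.4), tightness of $\{M_n^i\}$ in $D_{\rr}[0,\infty)$ follows from tightness of the quadratic variations (which is automatic once they converge in probability to a continuous limit) combined with the vanishing maximal-jump hypothesis. Tightness of each coordinate yields tightness of the vector process in $D_{\rr^d}[0,\infty)$, and the vanishing-jump condition forces any weak limit point to have continuous sample paths almost surely.

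For identification of the limit, I would use the Cram\'er--Wold device. Fix $\xi \in \rr^d$. Then $\xi \cdot M_n$ is a real-valued martingale whose maximal jump is dominated by $|\xi|$ times that of $M_n$, and whose quadratic variation satisfies
\[
[\xi\cdot M_n]_t = \sum_{i,j}\xi_i\xi_j[M_n^i,M_n^j]_t \longrightarrow \xi^T C(t)\xi
\]
in probability for every $t \ge 0$. The one-dimensional martingale CLT then gives that $\xi\cdot M_n$ converges weakly to a centered continuous Gaussian process with independent increments and variance $\xi^T C(t)\xi$ at time $t$. Since this holds for every $\xi$, Cram\'er--Wold identifies the finite-dimensional distributions of any limit $M$ as those of a centered continuous Gaussian process with independent increments and $E[M(t)M(t)^T]=C(t)$. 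Combined with the tightness established above, this yields $M_n \Rightarrow M$ in $D_{\rr^d}[0,\infty)$.

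The main obstacle is the careful verification of the hypotheses for the one-dimensional martingale CLT in a form suitable for appeal: continuity of $C$ must be invoked to rule out jumps of the limit (so that convergence of the quadratic variations lifts to the Skorohod topology), and the positive-definiteness of $C(t)-C(s)$ for $t>s$ is what ensures non-degeneracy and the correct conditional characteristic function $\exp\bigl(-\tfrac12 \xi^T(C(t)-C(s))\xi\bigr)$ for the increments. All the needed ingredients are supplied by the hypotheses, so the argument is essentially routine; alternatively one could quote the result directly from Jacod--Shiryaev or Ethier--Kurtz Chapter 7.
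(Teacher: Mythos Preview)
The paper does not prove this proposition at all: immediately after the statement it simply writes ``For a proof see e.g.\ Theorem 7.1.4 in Ethier and Kurtz,'' and moves on. Your sketch (tightness via the Aldous--Rebolledo criterion, identification via Cram\'er--Wold and the one-dimensional martingale CLT) is exactly the standard route taken in that reference, and you yourself note at the end that one could simply quote Ethier--Kurtz Chapter 7 directly---which is precisely what the paper does.
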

For a proof see e.g. Theorem 7.1.4 in Ethier and Kurtz \cite{EthierKurtz}.

The following proposition gives conditions under which a sequence of diffusions $X_n$ satisfying the stochastic integral equations
\begin{align}
\label{eq:diffusionseq}
X_n(t) = X_n(0) + \int_0^t \sigma(X_n,s-) dM_n(s) + \int_0^t b(X_n,t) dV_n(t)
\end{align}
converge to a limiting process. Here we take $M_n: \rr_+ \to (C[0,\infty))^d$ to be a $d-$dimensional martingale and $V_n:\rr_+ \to \rr^{d\times d}$ is a finite variation process. The following is a specialization of Theorem 5.4 from from \cite{KP}.

\begin{proposition}[Kurtz, Protter \cite{KP}]
\label{prop:SDEconvergence}
Suppose in (\ref{eq:diffusionseq}) $M_n$ is a martingale, and $V_n$ is a finite variation process. Assume that for each $t\geq 0$, $\sup_n E[[M_n]_t]<\infty$ and $\sup_n E[TV(V_n)]<\infty$ (where $TV$ indicates the total variation) and $(M_n, V_n) \Rightarrow (W,V)$, where $W$ is a standard Brownian motion and $V(t)=tI$. Suppose that the diffusion $X$ satisfies
\begin{equation}
\label{eq:diffusionlimit}
X(t) = X(0) + \int_0^t \sigma(X(s),s) dW(t) + \int_0^t b(X(s),s)ds
\end{equation}
and that (\ref{eq:diffusionlimit}) has a unique strong solution. Then $X_n \Rightarrow X$.
\end{proposition}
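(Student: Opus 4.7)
The plan is to establish weak convergence of $X_n$ to $X$ in the Skorokhod space $D([0,\infty),\rr^d)$ by combining tightness of the sequence with identification of the limit points as solutions of \eqref{eq:diffusionlimit}. The main obstacle is that the stochastic integration map $(H,M)\mapsto \int H\,dM$ is not in general continuous under weak convergence of $M$, so some uniform control on the integrators is essential; here that control is provided by the assumed uniform bounds on $E[[M_n]_t]$ and $E[TV(V_n)]$, which together are a form of the Kurtz--Protter uniformly controlled variation (UCV) condition.

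The first step is tightness of $\{X_n\}$. By a standard localization argument one can assume $\sigma$ and $b$ are bounded and Lipschitz (one introduces stopping times $\tau_n^K=\inf\{t:|X_n(t)|\ge K\}$ and shows that these do not escape to $0$ uniformly in $n$ using Gronwall together with Burkholder--Davis--Gundy applied to the martingale part and the $TV$ bound applied to the drift part). With these bounds in hand, the modulus-of-continuity estimates for $X_n$ follow from the same BDG/total-variation estimates, so Aldous' criterion (or Kurtz's tightness criterion) gives tightness of $\{X_n\}$, and hence of the joint law of $(X_n,M_n,V_n)$.

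Next, pick any weakly convergent subsequence and invoke Skorokhod representation to realize $(X_{n_k},M_{n_k},V_{n_k})\to(\tilde X,W,V)$ almost surely in the appropriate Skorokhod topology, on a common probability space. The goal is now to pass to the limit inside the stochastic integrals appearing in \eqref{eq:diffusionseq}. The finite-variation part is relatively straightforward: continuity of $b$ together with the uniform $TV$ bound and dominated convergence give $\int_0^\cdot b(X_{n_k},s)\,dV_{n_k}(s)\to \int_0^\cdot b(\tilde X,s)\,ds$. The martingale part is the crux: this is exactly the setting where the UCV hypothesis is needed. One shows that $\sigma(X_{n_k},\cdot-)$ is a ``good integrand'' in the sense of Kurtz--Protter, so that $\int_0^\cdot \sigma(X_{n_k},s-)\,dM_{n_k}(s)\to \int_0^\cdot \sigma(\tilde X,s)\,dW(s)$ in probability, uniformly on compact time intervals.

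Putting the pieces together, the subsequential limit $\tilde X$ satisfies the same integral equation \eqref{eq:diffusionlimit} as $X$, driven by the limiting Brownian motion $W$. By the assumed pathwise uniqueness (and hence uniqueness in law) of the limit SDE, $\tilde X\stackrel{d}{=}X$. Since every subsequential weak limit agrees in law with $X$ and the full sequence is tight, we conclude $X_n\Rightarrow X$. The hard step throughout is the martingale convergence inside the stochastic integral; everything else is bookkeeping built on top of that Kurtz--Protter ingredient.
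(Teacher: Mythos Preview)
The paper does not prove this proposition at all: it is stated as a specialization of Theorem~5.4 of Kurtz--Protter \cite{KP} and is simply cited as a known result, with no argument given. Your sketch is a reasonable outline of the standard Kurtz--Protter proof (tightness via BDG and total-variation bounds, Skorokhod representation, passage to the limit in the stochastic integrals under the UCV condition, and identification of the limit via uniqueness), but there is no proof in the paper to compare it against.
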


Lastly we will need a property of the diffusion $\alpha_\lambda(t)$. 
\begin{proposition}[\cite{BVBV}]
\label{prop:alphalimit}
The diffusion $\alpha_\lambda$ satisfying (\ref{eq:alpha}) converges a.s. to a multiple of $2\pi$ as $t \to \infty$.
\end{proposition}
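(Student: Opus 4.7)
The plan is to split $\alpha_\lambda$ into its finite variation and martingale parts and analyze each separately. Writing
\[
\alpha_\lambda(t) = \lambda\bigl(1 - e^{-\beta t/4}\bigr) + M_t, \qquad M_t = \int_0^t \re\bigl[(e^{-i\alpha_\lambda(s)}-1)\,dZ_s\bigr],
\]
the drift converges deterministically to $\lambda$, and the continuous local martingale $M_t$ has quadratic variation $[M]_t = \int_0^t 4\sin^2(\alpha_\lambda(s)/2)\,ds$ (using $|e^{-i\alpha}-1|^2 = 2-2\cos\alpha$). By Dambis--Dubins--Schwarz, $M_t$ converges almost surely iff $[M]_\infty<\infty$ a.s., in which case $\alpha_\lambda(t)\to\alpha_\infty$ a.s., and the resulting fact that $\sin^2(\alpha_\lambda(t)/2)\to 0$ (a consequence of the tail of $[M]$ being finite) forces the limit to lie in $2\pi\Z$ by continuity. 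So the whole proof reduces to showing $\int_0^\infty \sin^2(\alpha_\lambda(s)/2)\,ds<\infty$ almost surely.

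To establish this integrability, I would apply It\^o's formula to the ``log distance to fixed points'' function $f(\alpha)=-\log\bigl(4\sin^2(\alpha/2)\bigr)$, noting that the noise coefficient vanishes exactly on $2\pi\Z$. A short computation using $f'(\alpha)=-\cot(\alpha/2)$, $f''(\alpha)=\tfrac{1}{2}\csc^2(\alpha/2)$, and the DDS representation $dM = 2\sin(\alpha_\lambda/2)\,dB$ for an auxiliary Brownian motion $B$ gives
\[
f(\alpha_\lambda(t)) = f(\alpha_\lambda(0)) + t - \int_0^t \cot\bigl(\alpha_\lambda/2\bigr)\cdot \lambda\tfrac{\beta}{4}e^{-\beta s/4}\,ds - 2\int_0^t \cos\bigl(\alpha_\lambda/2\bigr)\,dB_s.
\]
The deterministic $+t$ term (which is the generator contribution from $f''$ cancelling with the quadratic variation of $\alpha_\lambda$) drives $f$ to infinity; the martingale has quadratic variation bounded by $4t$ and so is $O(\sqrt{t\log\log t})$ by the law of the iterated logarithm. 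Once the remaining drift integral is shown to be $o(t)$, we obtain $f(\alpha_\lambda(t))/t\to 1$ and hence $\sin^2(\alpha_\lambda(t)/2) = O(e^{-t})$ almost surely, which is more than enough for $[M]_\infty<\infty$.

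The main technical obstacle is controlling the singular integral involving $\cot(\alpha_\lambda/2)$, since the integrand blows up precisely at the fixed points the process is being attracted to. My plan is a bootstrap: use the crude bound $|\cot(x/2)|\le C/|\sin(x/2)|$ together with the exponential decay of the time weight $e^{-\beta s/4}$ to show first that the integral grows at most like $t^{\alpha}$ for some $\alpha<1$, then iterate the resulting decay estimate on $\sin^2(\alpha_\lambda/2)$ to upgrade to $o(t)$. A cleaner alternative, probably the route of \cite{BVBV}, is to lift $\alpha_\lambda$ to the Brownian carousel representation in the hyperbolic plane, where the claim becomes the classical fact that hyperbolic Brownian motion with integrable drift converges to a point on $\partial\mathbb{H}$; the multiples of $2\pi$ then correspond to the winding number of the hit boundary point.
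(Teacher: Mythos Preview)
The paper does not prove this proposition; it is quoted from \cite{BVBV} and used as a black box in Section~\ref{sec:transition}. So there is no in-paper argument to compare against. Your ``cleaner alternative'' at the end --- lifting to the Brownian carousel in the hyperbolic plane and invoking convergence of hyperbolic Brownian motion to a boundary point --- is in fact essentially how \cite{BVBV} establishes the result, not your primary SDE argument.

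Your direct approach has the right skeleton (reduce everything to $[M]_\infty<\infty$ and then force the limit into $2\pi\Z$; that reduction is correct), but the It\^o computation with $f(\alpha)=-\log\bigl(4\sin^2(\alpha/2)\bigr)$ has a genuine gap. The process starts at $\alpha_\lambda(0)=0\in 2\pi\Z$, where $f$ is already $+\infty$, and for $\lambda>0$ it subsequently crosses further multiples of $2\pi$ --- each such crossing is precisely what the counting function $N_\beta(\lambda)$ records. At every crossing $f(\alpha_\lambda)$ blows up and $\cot(\alpha_\lambda/2)$ acquires a $1/(t-t_k)$-type singularity, so the integrated identity you wrote cannot hold on all of $[0,t]$ without a careful principal-value interpretation that you do not supply. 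More seriously, to conclude $f(\alpha_\lambda(t))/t\to 1$ you implicitly need a \emph{last} crossing time after which to run the argument; but knowing there are only finitely many crossings is equivalent to $N_\beta(\lambda)<\infty$, which is itself a substantial part of what the proposition asserts. Your bootstrap sketch for the cotangent integral does not break this circularity, and the exponential weight $e^{-\beta s/4}$ alone cannot help since crossings could a priori occur at arbitrarily late times.
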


\begin{proof}[Proof of Theorem \ref{thm:transition}]
We study the difference $M_{a,\beta}(\lambda+x)- M_{a,\beta}(\lambda)$ as $\lambda \to \infty$ by way of the SDE characterization. For convenience we use the diffusion $\psi_{a,\lambda, x} = \varphi_{a,\lambda+x}-  \varphi_{a,\lambda}$ defined in section 2 (see (\ref{psilambda})). The proof breaks into two pieces. The first is to show that for any finite set $\{x_1, x_2,...,x_k\}$ the family of diffusions $\{\psi_{a,\lambda,x_i}\}_{i=1,...,k}$ converge weakly to a family of diffusions $\{\hat \psi_{x_i}\}_{i=1,...,k}$ on compact sets $[0,T]$. We then show that for $T$ sufficiently large this convergence is enough, that is that the tail of the diffusion is well behaved. These together will be sufficient for process convergence because they will show that the finite marginals of $M_{a,\beta}(\lambda+x)- M_{a,\beta}(\lambda)$ will converge to those of $N_\beta(x/4)$ where $N_\beta$ is distributed as the counting function of $\Sineb$.

Recall that $\psi_{a,\lambda,x}$ satisfies the SDE in (\ref{psilambda}). In order to study the limit as $\lambda \to \infty$ we start by showing that the martingales
\begin{equation}
W_{\lambda,1}(t) =\sqrt{2} \int_0^t \sin\left(\frac{\varphi_{a,\lambda}}{2}\right)dB_t \qquad \text{ and } \qquad W_{\lambda,2}(t) =\sqrt{2} \int_0^t \cos\left(\frac{ \varphi_{a,\lambda}}{2}\right)dB_t
\end{equation}
converge in distribution to two independent Brownian motions $W_1(t)$ and $W_2(t)$ as $\lambda \to \infty$. We then use these limits to show the convergence of the vector $(\psi_{a,\lambda,x_1}(t),...,\psi_{a,\lambda,x_k}(t))$ to $(\hat \psi_{x_1}(t),\dots, \hat \psi_{x_k}(t))$, which is a vector of time and space changed versions of $\alpha_x$, for $x=x_1,...,x_k$.

We show that $(W_1, W_2)$ is a 2 dimensional Brownian using Proposition \ref{prop:martingaleCLT}. Since $W_{\lambda,1}(t)$ and $W_{\lambda,2}(t)$ are continuous we need only check the quadratic variations $[W_{\lambda,1}]_t, [W_{\lambda,2}]_t\to t$ and $[W_{\lambda,1},W_{\lambda,2}]_t \to 0$ as $\lambda \to \infty$. We use the integral representation of $W_{\lambda,1}$  and Proposition \ref{prop:oscillation} to observe that for any $T\in [0,\infty)$
\[
[W_{\lambda,1}]_t =2 \int_0^t \sin^2 \left( \frac{\varphi_{a,\lambda}}{2}\right) dt=t- \int_0^{t} \cos(\varphi_{a,\lambda})  ds  \to t
\]
in probability as $\lambda \to \infty$ for $t\in [0,T]$. Similar calculations may be done for $[W_{\lambda,2}]_t$ and $[W_{\lambda,1},W_{\lambda,2}]_t$.

To show the SDE convergence we can again use Proposition \ref{prop:oscillation} to show that the first two drift terms in (\ref{psilambda}) vanish as $\lambda \to \infty$. We have now identified that for a fixed $x$ the limiting diffusion should satisfy the SDE
\begin{align}
\label{eq:psi}
d\hat \psi_{x} =  \beta x e^{-\beta t/8}dt +\frac{1}{\sqrt{2}} \left[\cos \left(\frac{\hat \psi_x}{2}\right)-1\right]dW_1 +\frac{1}{\sqrt{2}} \sin \left(\frac{\hat \psi_x}{2}\right)dW_2, \qquad \psi_{x}(0)=0.
\end{align}
Proposition \ref{prop:SDEconvergence} gives us that if this SDE has a strong solution, then $\hat \psi_{x} \ed \lim_{\lambda \to \infty} \psi_{a,\lambda,x}$ exists and satisfies (\ref{eq:psi}). To show that (\ref{eq:psi}) has a strong solution we will show this is equivalent to the SDE for $\alpha_\lambda$ having a strong solution.  Apply time change $t/2 \mapsto s$ and space change $\hat \psi_x/2 \mapsto \tilde \psi_x$ to get 
\begin{equation}
d \tilde \psi_x = \beta x e^{-\beta s/4}ds + \textup{Re} \left[(e^{-i\tilde \psi_x}-1)d(\tilde W_1+i\tilde W_2)\right],
\end{equation}
where $\tilde W_1$ and $\tilde W_2$ here are the time changed $W_1$ and $W_2$.  Notice that the time change is independent of $x$ and so $\tilde W_1$ and $\tilde W_2$ are the same driving Brownian motions for $x_1,...,x_k$. Therefore the diffusions $\tilde \psi_{x_i}$ are coupled through their Brownian terms. Lastly observe that $\tilde \psi_{x_i} \ed \alpha_{4x_i}$ and $\alpha_\lambda$ has a strong solution \cite{BVBV}.   

For the next step we need to show that convergence of the diffusions implies convergence of the processes $\Bess$ to $\Sineb$. To do this we show convergence of the finite dimensional marginals of the counting function. That is we show that for any finite collection $\{x_1,...,x_k\}\in \rr$ we have 
\[
\{M_{a,\beta}(\lambda + x_i)-M_{a,\beta}(\lambda) \}_{i=1,...,k} \Rightarrow \{N_{\beta}(x_i/4)\}_{i=1,...,k}
\]
jointly in law as $\lambda \to \infty$. 

We begin with the following lemma, which gives us that it is enough to study the diffusion $\psi_{a,\lambda,x}$.
\begin{lemma}
\label{lem:psilimit}
For $\psi_{a,\lambda,x}$ and $M_{a,\beta}(\lambda)$ defines as above we have
\[
P\left(\lim_{t\to \infty} \lfloor (\psi_{a,\lambda,x}(t)+2\pi)/4\pi\rfloor = M_{a,\beta}(\lambda+x)- M_{a,\beta}(\lambda)\right)=1.
\]
\end{lemma}
For convenience we introduce the notation $\lfloor y \rfloor_{4\pi} = 4\pi \lfloor y/(4\pi)\rfloor$. We now show that for any $\eps>0$ we can choose $\lambda$ and $T$ sufficiently large so that 
\begin{align}
P&\left(\lfloor\psi_{a,\lambda,x_i}(\infty)+2\pi\rfloor_{4\pi}= \lfloor \psi_{a,\lambda,x_i}(T)+2\pi\rfloor_{4\pi}, i = 1,\dots,k \right) > 1- \eps, \quad \text{ and } \label{eq:psilimit} \\
P&\left( \hat \psi_{x_i}(\infty) = \left\lfloor \hat \psi_{x_i}(T)+2\pi \right\rfloor_{4\pi}, i = 1,\dots, k\right) > 1-\eps, \label{eq:hatpsilimit}
\end{align}
where evaluation at $\infty$ should be understood as a limit. Since we have that 
\[
\{\psi_{a,\lambda,x_1}(T), \dots, \psi_{a,\lambda,x_k}(T)\} \Rightarrow \{\hat \psi_{x_1}(T), \dots, \hat \psi_{x_k}(T)\} \quad \text{ as } \quad \lambda \to \infty
\]
this will be sufficient to complete the proof. 

For equation (\ref{eq:hatpsilimit}) Proposition \ref{prop:alphalimit} gives us that $\hat \psi_x$ converges almost surely to a multiple of $4\pi$. Therefore for a finite collection $\{x_1,\dots, x_k\}$ and $0<\delta<2\pi$ we may choose $T$ sufficiently large so that $P(|\hat \psi_{x_i}(T)- \hat\psi_{x_i}(\infty)|<\delta, i= 1, \dots, k)>1-\eps$.  From this (\ref{eq:hatpsilimit}) follows.

For equation (\ref{eq:psilimit}) we begin with the bound
\begin{align}
P&\left(\left\lfloor \psi_{a,\lambda,x_i}(\infty)+2\pi\right\rfloor_{4\pi}= \left\lfloor \psi_{a,\lambda,x_i}(T)+2\pi \right\rfloor_{4\pi}, i = 1,\dots,k \right) \\
&\qquad \qquad \ge 1 - \sum_{i=1}^k P\left(\left\lfloor \psi_{a,\lambda,x_i}(\infty)+2\pi\right\rfloor_{4\pi}\ne \left\lfloor \psi_{a,\lambda,x_i}(T)+2\pi \right\rfloor_{4\pi} \right) 
\end{align}
And observe that
\begin{align}
P&\left(\left\lfloor \psi_{a,\lambda,x_i}(\infty)+2\pi\right\rfloor_{4\pi}\ne \left\lfloor \psi_{a,\lambda,x_i}(T)+2\pi \right\rfloor_{4\pi} \right) \\
& \le 1- P\left(\left\lfloor \psi_{a,\lambda,x_i}(\infty)+2\pi\right\rfloor_{4\pi}= \left\lfloor \psi_{a,\lambda,x_i}(T)+2\pi \right\rfloor_{4\pi}, | \psi_{a,\lambda,x_i}(T)- \lfloor  \psi_{a,\lambda,x_i}(T)+2\pi\rfloor_{4\pi}|<\delta \right). \notag
\end{align}
We use the following lemma to complete the proof.
\begin{lemma}
\label{lemma:conditioned}
For $|\eta|<\delta < 1/16$ there exists some $A$ (uniform in $x$) so that for $T \ge - \frac{8}{\beta}\log (\delta)$
\begin{align}
P\Big(\left\lfloor \psi_{a,\lambda,x}(\infty)+2\pi\right\rfloor_{4\pi}\ne \left\lfloor \psi_{a,\lambda,x}(T)+2\pi \right\rfloor_{4\pi} \Big| \psi_{a,\lambda,x}(T)- \lfloor \psi_{a,\lambda,x}(T)+2\pi& \rfloor_{4\pi}= \eta \Big)\label{eq:conditioned} \\
 & \le (x+A)\sqrt{\delta} \notag. 
\end{align}
\end{lemma}
This gives us that 
\begin{align*}
P\Big(\left\lfloor \psi_{a,\lambda,x_i}(\infty)+2\pi\right\rfloor_{4\pi}\ne \left\lfloor \psi_{a,\lambda,x_i}(T)+2\pi \right\rfloor_{4\pi}, | \psi_{a,\lambda,x_i}(T)- \lfloor  \psi_{a,\lambda,x_i}(T)+&2\pi \rfloor_{4\pi}|<\delta \Big)\\
& \le (x_i+A)\sqrt{\delta}.\notag
\end{align*}
In order to complete the bound in equation (\ref{eq:psilimit}) we follow the reasoning from when we proved (\ref{eq:hatpsilimit}). We observe that $\psi_{a,\lambda,x}(T)$ converges in distribution to $\hat \psi(T)$ as $\lambda \to \infty$, and $\hat \psi_{x_i}(T)$ will be close to a multiple of $4\pi$ with high probability when $T$ is large enough. Therefore for any $\eps>0$ we may choose $T$ and $\lambda$ large enough so $P(| \psi_{a,\lambda,x_i}(T)- \lfloor  \psi_{a,\lambda,x_i}(T)+2\pi \rfloor_{4\pi}|<\delta)>1-\eps$. Together these bounds give us that 
\[
P\left(\left\lfloor \psi_{a,\lambda,x_i}(\infty)+2\pi\right\rfloor_{4\pi}\ne \left\lfloor \psi_{a,\lambda,x_i}(T)+2\pi \right\rfloor_{4\pi} \right) \le \eps + (x/4+A)\sqrt{\delta},
\]
where $\eps$ and $\delta$ may be chose arbitrarily small.
\end{proof}

\begin{proof}[Proof of Lemma \ref{lemma:conditioned}]

We first show this for the case where $\eta>0$, in which case $\lfloor \psi_{a,\lambda,x}(T)\rfloor_{4\pi} = \lfloor \psi_{a,\lambda,x}(T)+2\pi\rfloor_{4\pi}$. We study the diffusion $\psi_{a,\lambda,x}$ on two regions. The first is $[T,T_{\lambda,\delta}]$ and the second is $[T_{\lambda,\delta},\infty)$, where $T_{\lambda,\delta}= \frac{8}{\beta}\log (\lambda  \delta)$. For $x>0$ the process $\lfloor\psi_{a,\lambda,x}(t)\rfloor_{4\pi}$ is monotone increasing in $t$, therefore it follows that $\psi_{a,\lambda,x}(\infty)-\lfloor \psi_{a,\lambda,x}(T)\rfloor_{4\pi}$ is a strictly positive random variable so we can apply Markov's inequality. We use the integral form of (\ref{psilambda}) together with Proposition \ref{prop:oscillation} to get a bound on the expected value. Let $\hat M = \frac{\beta}{8}(\beta^2(a+1/2)+1) M$, then from the integrated form we have
\begin{align*}
E\big(\psi_{a,\lambda,x}(T_{\lambda,\delta})- \lfloor\psi_{a,\lambda,x}(T)\rfloor_{4\pi} \big| \psi_{a,\lambda,x}(T) -  \lfloor\psi_{a,\lambda,x}(T)\rfloor_{4\pi} = \eta \big) \leq xe^{-\frac{\beta}{8}T} +(\hat M +1)\delta.
\end{align*}
Therefore 
\begin{equation}
P\big(\psi_{a,\lambda,x}(T_{\lambda,\delta})-\psi_{a,\lambda,x}(T)>\sqrt{\delta}\big| \psi_{a,\lambda,x}(T) -  \lfloor\psi_{a,\lambda,x}(T)\rfloor_{4\pi} = \eta \big) \leq \frac{ x}{\sqrt{\delta}}e^{-\frac{\beta}{8}T}+\frac{(\hat M +1)}{\sqrt{\delta}}\delta.
\label{eq:tailhead}
\end{equation}
The restriction $T \ge - \frac{8}{\beta} \log \delta$ gives us the desired bound on this region.

For the case $\eta<0$ let $\hat \psi_{a,\lambda,x}\ed \psi_{a,\lambda,-x}$ with initial condition $\hat\psi_{a,\lambda,x}(T)= \eta$ then for $t\ge T$ and $\hat \psi_{a,\lambda,x}$ coupled with $\psi_{a,\lambda,x}$ through the noise term we get
\begin{equation}
\hat \psi_{a,\lambda,x}(t) \le \psi_{a,\lambda,x}(t) - \lfloor \psi_{a,\lambda,x}(T)+2\pi \rfloor.
\end{equation}
The same arguments used for the case $\eta>0$ give us 
\[
P\big(\lfloor \psi_{a,\lambda,x}(T)+2\pi\rfloor_{4\pi}-\hat \psi_{a,\lambda,x}(T_{\lambda,\delta})>\sqrt{\delta} \big) \leq \frac{ x}{\sqrt{\delta}}e^{-\frac{\beta}{8}T}+(\hat M +1)\sqrt{\delta},
\]
which is sufficient to complete the bound for the interval $[T,T_{\lambda,\delta}]$.

We now turn to the stretch $[T_{\lambda,\delta},\infty)$. First observe that at time $T_{\lambda,\delta}$ we have $(\lambda+x) e^{-\frac{\beta}{8}T_{\lambda,\delta}} = \frac{1}{\delta} + \frac{x}{\lambda \delta}$. We return to the original diffusions $\varphi_{a,\lambda+x}$ and $\varphi_{a,\lambda}$. We show that for $\eta < \sqrt{\delta}$ 
\begin{equation}
\label{eq:tailtail}
P\left(\lim_{t\to \infty} \lfloor \varphi_{a,\lambda+x}(t)\rfloor_{4\pi}-\lfloor \varphi_{a,\lambda}(t)  \rfloor_{4\pi} \ne \lfloor\psi_{a,\lambda,x}(T_{\lambda,\delta})\rfloor_{4\pi} \bigg|\ \psi_{a,\lambda,x}(T_{\lambda,\delta}) =\eta \mod 4\pi \right) \ge C(\lambda,\delta),
\end{equation}
where for any $\eps>0$ we may choose $\lambda$ large enough and $\delta$ small enough so that  $C(\lambda,\delta)>1 - \eps$.

To start we note that this problem is equivalent to studying the diffusions restarted at $T_{\lambda,\delta}$, that is $\tilde \varphi_{a,\frac{1}{\delta}+ \frac{x}{\lambda \delta}}$ and $\tilde \varphi_{a,\frac{1}{\delta}}$ satisfying (\ref{phi}) with initial conditions $\tilde \varphi_{a,\frac{1}{\delta}+ \frac{x}{\lambda \delta}}(0) = \varphi_{a,\lambda+x}(T_{\lambda,\delta})\mod 4\pi$ and $\tilde \varphi_{a,\frac{1}{\delta}}(0) = \varphi_{a,\lambda}(T_{\lambda,\delta})\mod 4\pi$. 

Before moving forward we make the following observation: For large enough time $S$ (depending on $\delta$, assuming $x/\lambda \le 1$)
\begin{equation}
P\left( \lfloor \tilde \varphi_{a,\frac{1}{\delta}+ \frac{x}{\lambda \delta}}(S)\rfloor_{4\pi} \ne \lfloor \tilde \varphi_{a,\frac{1}{\delta}+ \frac{x}{\lambda \delta}}(\infty)\rfloor_{4\pi} ,  \lfloor \tilde \varphi_{a,\frac{1}{\delta}}(S)\rfloor_{4\pi} \ne \lfloor \tilde \varphi_{a,\frac{1}{\delta}}(\infty)\rfloor_{4\pi}\right)\ge 1- \eps/2.
\end{equation}
This follows from the fact that $\lfloor \tilde \varphi (t) \rfloor_{4\pi}$ is a monotone increasing function with an almost surely finite limit. Then by using continuous dependence on parameters and initial conditions ($\tilde \varphi_{a,\frac{1}{\delta}+ \frac{x}{\lambda \delta}}(0) -\tilde \varphi_{a,\frac{1}{\delta}}(0) =\eta$ with $|\eta|<\sqrt{\delta}$) we also have that for small enough $\delta$ and large enough $\lambda$
\begin{equation}
P\left( | \tilde \varphi_{a,\frac{1}{\delta}+ \frac{x}{\lambda \delta}}(S)- \tilde \varphi_{a,\frac{1}{\delta}+ \frac{x}{\lambda \delta}}(S)|<2\pi \right) \ge 1-\eps/2.
\end{equation}
The last two bounds, together with the final statement in Theorem \ref{M}, complete the proof of the bound in equation (\ref{eq:tailtail}). The bounds from equations (\ref{eq:tailtail}) and (\ref{eq:tailhead}) give (\ref{eq:conditioned}) which completes the proof of the lemma.

\end{proof}

\begin{proof}[Proof of Lemma \ref{lem:psilimit}]
We need the final statement in Theorem \ref{M}. This gives us that for a fixed $\lambda$ and large enough $T$ we will have that for $t\ge T$, $\varphi_{a,\lambda}(t) \in (M_{a,\beta}(\lambda)4\pi+2\pi, M_{a,\beta}(\lambda)4\pi+4\pi)$. Let $k = M_{a,\beta}(\lambda)$ and $\ell = M_{a,\beta}(\lambda+x)$, then we can choose $T$ large enough so that for $t\ge T$
\begin{align*}
\varphi_{a,\lambda}(t) \in (k4\pi+2\pi, (k+1)4\pi), \quad \text{ and } \quad \varphi_{a,\lambda+x}(t) \in (\ell4\pi+2\pi, (\ell+1)4\pi).
\end{align*}
This gives us that for $t\ge T$
\[
4(\ell-k)\pi + 2\pi > \psi_{a,\lambda,x}(t) > 4(\ell-k)\pi- 2\pi.
\]
Therefore $\lim_{t\to \infty} \lfloor (\psi_{a,\lambda,x}(t)+2\pi)/4\pi\rfloor = M_{a,\beta}(\lambda+x)- M_{a,\beta}(\lambda)$, and it is sufficient to study the $\psi_{a,\lambda,x}$ diffusion.
\end{proof}


\section{The central limit theorem}
\label{bessCLT}

The proof of the central limit theorem may be done in a manner similar to the proof for the $\Sineb$ process which was done by Kritchevski, Valk\'o, and Vir\'ag in \cite{KVV}, but here we will get the result as an easy consequence of Proposition \ref{prop:oscillation}.  

\begin{proof}[Proof of Theorem \ref{thm:bessclt}]

First notice that the process $\hat \varphi_{a,\lambda}(t) = \varphi_{a,\lambda}(t+T)$ with $T = \frac{8}{\beta} \log (\lambda)$ satisfies the same SDE (\ref{phi}) with $\lambda = 1$ with a random initial condition.  Since the equation is $4\pi$ periodic in the $\varphi$ variable and we wish to consider the difference with its limit we may shift the process down so that the initial condition is in the interval $[0,4\pi]$. That is $\hat \varphi_{a,\lambda}(\infty) - \hat \varphi_{a,\lambda}(t)=\tilde \varphi_{a,\lambda}(\infty) - \tilde \varphi_{a,\lambda}(t)$ where $\tilde \varphi_{a,\lambda}(t) = \hat \varphi_{a,\lambda}(t)- \lfloor \hat \varphi_{a,\lambda}(0)\rfloor_{4\pi}$. Here we use $\lfloor \cdot \rfloor_{4\pi}$ to denote rounding down to the next multiple of $4\pi$ as in section 3.  From this we get that 
\[
\frac{\varphi_{a,\lambda}(\infty) - \varphi_{a,\lambda}(T)}{\sqrt{\log \lambda}} \to 0
\]
in distribution and hence in probability.  Because of this it is sufficient to consider the weak limit of 
\[
\frac{\varphi_{a,\lambda}(T) - 8\lambda}{4\pi \sqrt{\log \lambda}} \quad \text{ as } \lambda \to \infty.
\]
Written in its integrated form (and dropping the $a,\lambda$ subscripts), the SDE for $\varphi_{a,\lambda}$ gives us
\begin{align}
\label{bbb}
\varphi(T) - 8\lambda  +8 - 2\pi  & = \frac{\beta}{2} (a+1/2) \int_0^T \sin \big( \frac{\varphi}{2}\big) dt +\int_0^T \frac{\sin (\varphi)}{2} dt + 2 \int_0^T\sin \big(\tfrac{\varphi}{2}\big) dB_t.
\end{align}
We will show that when scaled down by $\sqrt{\log \lambda}$ the first two terms vanish in the limit, then show that the martingale term has the appropriate variance. An application of Proposition \ref{prop:oscillation} gives that the expected value of the first two integrals is finite for all $\lambda$, and so when scaled down by $\sqrt{\log \lambda}$ we get convergence to 0 in probability.

We now turn our attention to the last remaining term in (\ref{bbb}).  We rewrite this as
\[
\frac{1}{\sqrt \lambda} 2\int_0^T \sin \big( \tfrac{\varphi}{2}\big)dB_t = \hat B \left( \frac{1}{\lambda} 4 \int_0^T \sin^2 \big( \tfrac{\varphi}{2}\big)dt\right)= \hat B \left(\frac{16}{\beta} - \frac{2}{\log \lambda}\int_0^T \cos \big( \tfrac{\varphi}{2} \big) dt   \right)
\]
for some standard Brownian motion $\hat B_t$ which depends on $\lambda$. By Proposition \ref{prop:oscillation} this final integral term goes to $0$ in probability. Therefore
\[
P\left(\left|  \hat B \left(\frac{16}{\beta} - \frac{2}{\log \lambda}\int_0^T \cos \big( \tfrac{\varphi}{2} \big) dt   \right) - \hat B \left( \frac{16}{\beta}\right) \right| > \eps\right)
\] may be made arbitrarily small. This is enough to give the desired convergence in distribution, and so completes the proof.

\end{proof}


\section{Large Deviations}

In \cite{DHBV} a large deviation result was proved for the $\Sineb$ process on growing intervals $[0,\lambda]$. The proof of the large deviation for the $\Bess$ process is similar with a few notable differences. The details of the proof will be largely omitted, but we will give an outline of the proof and fill in the details in the steps where the proof differs significantly from the one for $\Sineb$.

\subsubsection*{Outline of the proof of Theorem \ref{thm:bess}}

The proof of the large deviation principle is done by first proving a large deviation principle for the path of the $\varphi_{a,\lambda}$ diffusion.  We use this together with the contraction principle to prove the large deviation result for the end point of the diffusion. The proof of the LDP for the path begins with the observation that $\lfloor \varphi_{a,\lambda}\rfloor_{2\pi}:= 2\pi \lfloor \varphi_{a,\lambda}/2\pi\rfloor$ is a monotone nondecreasing function for $\lambda >0$. Because of this it is enough to understand the time it take $\varphi_{a,\lambda}$ to traverse an interval of the form $[2\pi k, 2\pi (k+1)]$.  Bounds on these travel times will be done using a Girsanov change of measure and for various reasons are easier to compute when the $\frac{\beta}{4}\lambda e^{-\frac{\beta}{4}t}dt$ term is replaced with a constant (or piecewise constant) drift. What follows is a more detailed outline of how to prove Theorem \ref{thm:bess}.

\subsubsection*{\bf Step 1: An LDP for a modified diffusion}

To start we define a diffusion with no time dependence in the drift. Let $\tilde \varphi_{a,\lambda}$ satisfy
\begin{equation}
d \tilde \varphi_{a,\lambda}= \frac{\beta}{2} (a+\tfrac{1}{2}) \sin \big( \frac{\varphi_{a,\lambda}}{2}\big) dt +  \lambda dt + \frac{\sin \varphi_{a,\lambda}}{2} dt + 2 \sin \big(\tfrac{\varphi_{a,\lambda}}{2}\big) dB_t,  \label{eq:tildephi}
\end{equation}
with $\tilde \varphi_{a,\lambda}(0)=2\pi.$ We prove a path large deviation principle for this diffusion on finite time intervals. To prove this we take the following steps:

\begin{enumerate}

\item We can see that when $\tilde \varphi_{a,\lambda}$ is a multiple of $2\pi$ all of the terms vanish except for the $\lambda dt$ term. From this we get that $\lfloor \tilde \varphi_{a,\lambda}\rfloor_{2\pi}$ is nondecreasing when $\lambda >0$.  Define $\tau_k = \inf_{t}\{\tilde \varphi_{a,\lambda}(t)=2\pi (k+1)\}- \inf_{t}\{\tilde \varphi_{a,\lambda}(t)=2\pi k\}$ to be the travel time for the interval $[2\pi k, 2\pi(k+1)]$. Then we will get that for all $k\in \mathbb{N}$, $\tau_{2k}\ed \tau_2$ and $\tau_{2k+1} \ed \tau_3$. We make use of the same change of variables and Girsanov arguments as the $\Sineb$ to get: for $A<1$, then for $\tau= \tau_2, \tau_3$ we have 
\begin{align*}\label{expmbnd}
E e^{ \frac{\lambda^2 A}{8} \tau- \frac{\lambda \tau}{4}(|A|\wedge  \sqrt{|A|})(1+\tfrac{\beta}{2}(a+\tfrac{1}{2}))} \leq  e^{- \lambda \II(A)}.
\end{align*}
Let $t_A=4 K(A)$ and fix  $0<\eps<|t_A-2\pi|$, then
\begin{align*}
P(\lambda \tau \in [t_A- \varepsilon,t_A+\varepsilon] )&\geq C(\varepsilon,\lambda,A) e^{ -\lambda(\II(A)+\tfrac{A t_A}{8}) -  \lambda \frac{|A|\eps}{8} -\lambda |A|(t_A+\varepsilon)(1+\tfrac{\beta}{2}(a+\tfrac{1}{2}))},
\end{align*}
where $\lim\limits_{\lambda\to \infty} C(\eps, \lambda, A)=1$  for fixed $a, \eps$. See Proposition 8 in \cite{DHBV} for the idea of the proof.

\item From the jump bounds we get the following estimates for comparing the diffusion with a linear path: There exist a constant $c$ so that for $\lambda>2$ we have 
\begin{align}
e^{-\lambda^2 t \ldp(q)+\lambda c(t+1)( \ldp(q)+1)}\ge \begin{cases}
  \, P(\lceil \wt \alpha_\lambda(t)\rceil_{2\pi} \ge q t \lambda) &\qquad\textup{if $q>1$,}\\[4pt]
  \, P(\lfloor \wt \alpha_\lambda(t)\rfloor_{2\pi} \le q t \lambda)& \qquad \textup{if $0<q<1$.}
  \end{cases} 
\end{align}
Moreover, there are absolute constants $c_0,c_1$ so that if $qt\lambda, q$ and $\lambda q \log q$ are all bigger than $c_0$ then
\begin{align}\label{eq:uptail}
 P(\lceil \tilde \varphi_{a,\lambda}(t)\rceil_{2\pi} \ge q t \lambda)\le e^{-c_1 \lambda^2 t \,q^2 \log q}.
\end{align}

\item The bounds from the previous step may be used to derive a path deviation result for $\tilde \varphi_{a,\lambda}$.

\begin{theorem}\label{thm:tildepath}
Fix $T>0$ and let $\tilde  \varphi_{a,\lambda(t)}$ be the process defined in (\ref{eq:tildephi}). Then the sequence of rescaled processes $(\tfrac{\tilde  \varphi_{a,\lambda(t)}}{\lambda}, t\in [0,T])$ satisfies a large deviation principle on $C[0,T]$ with the uniform topology with scale $\lambda^2$ and good rate function $\Pathtilde$. The rate function is defined as 
\begin{equation}
\Pathtilde(g)=\int_0^T \ldp\left(g'(t)\right) dt\nonumber
\end{equation}
in the case where $g(0)=0$ and $g$ is absolutely continuous with non-negative derivative $g'$, and  $\Pathtilde(g)=\infty$ in all other cases.
\end{theorem}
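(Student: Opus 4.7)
The plan is to adapt the path-level LDP argument for $\Sineb$ from \cite{DHBV}, using as the two inputs the travel-time estimates from Step~1 and the upper tail bound \eqref{eq:uptail}. Two structural facts drive the argument. First, for $\lambda>0$ the process $\lfloor \tilde\varphi_{a,\lambda}\rfloor_{2\pi}$ is monotone non-decreasing, so a sample path is essentially determined by the sequence of travel times $\tau_k$ through the intervals $[2\pi k,2\pi(k+1)]$. Second, by the strong Markov property these travel times are independent and alternate in law between $\tau_2$ and $\tau_3$, for which Step~1 provides both a sharp exponential moment bound and a matching lower estimate on being near a prescribed value. This reduces the path LDP to a large-deviations problem for sums of essentially i.i.d. random variables, parameterized block by block.

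First I would establish exponential tightness at speed $\lambda^2$ in the uniform topology on $C[0,T]$. Partition $[0,T]$ into short pieces and apply \eqref{eq:uptail} on each piece to rule out slopes exceeding a large constant $q$, with probability at most $\exp(-c_1\lambda^2 t\,q^2\log q)$. Combined with monotonicity, this places the rescaled process in a uniformly equicontinuous (hence precompact) subset of $C[0,T]$ with probability $1-e^{-K\lambda^2}$ for any prescribed $K$, and shows the effective rate function must be infinite off the set of non-decreasing absolutely continuous functions with $g(0)=0$.

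For the lower bound at such a $g$ with $\Pathtilde(g)<\infty$, approximate $g$ uniformly by non-decreasing piecewise linear $g_n$ with slopes $q_{n,i}$ on a partition $0=t_0<\cdots<t_{m_n}=T$. On each block use the Step~1 lower bound to produce $\lfloor q_{n,i}(t_i-t_{i-1})\lambda/(2\pi)\rfloor$ travel times clustered in a window around $4K(A_i)/\lambda$, with $A_i$ chosen so that the block slope matches $q_{n,i}$. Strong Markov lets the block events be concatenated, yielding probability at least $\exp(-\lambda^2\sum_i (t_i-t_{i-1})\ldp(q_{n,i})+o(\lambda^2)) = \exp(-\lambda^2\Pathtilde(g_n)+o(\lambda^2))$; sending $n\to\infty$ with $g_n\to g$ in $C[0,T]$ and $\Pathtilde(g_n)\to\Pathtilde(g)$ finishes this direction. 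For the matching upper bound I would run a blockwise G\"artner--Ellis argument: the exponential moment estimate from Step~1 yields, per block, the correct Legendre transform $\ldp$; strong Markov gives factorization across blocks; and \eqref{eq:uptail} kills the slopes outside a compact range. A projective-limit passage from finite partitions to the path then upgrades this to the path-level upper bound.

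The main obstacle will be the intra-block accounting: between two consecutive $2\pi$-crossings the path must not make a macroscopic excursion, so the rate function extracted from the travel times is the correct one for the uniform topology. This is settled by combining monotonicity of $\lfloor\tilde\varphi_{a,\lambda}\rfloor_{2\pi}$ with \eqref{eq:uptail} to bound the local modulus of continuity at scale $\lambda^2$, and is the main place where extra care is needed beyond the $\Sineb$ template, owing to the additional Bessel-parameter drift $\frac{\beta}{2}(a+\tfrac12)\sin(\tilde\varphi_{a,\lambda}/2)$ and $\tfrac12\sin(\tilde\varphi_{a,\lambda})$ terms in \eqref{eq:tildephi}; however, these contribute only a $\lambda$-order correction in the exponent and so are absorbed into the $o(\lambda^2)$ error, exactly as in the jump estimates of Step~1.
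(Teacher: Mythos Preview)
Your proposal is correct and follows essentially the same approach as the paper: the paper does not give a standalone proof of this theorem but simply states that the jump-time bounds and the linear-path comparison estimates (including \eqref{eq:uptail}) from items~1 and~2 feed into the path-LDP machinery of \cite{DHBV}, which is precisely the template you outline. Your remarks about exponential tightness, the blockwise upper/lower bounds via the travel-time estimates, and the absorption of the extra Bessel drift terms at order $\lambda$ match the paper's indicated strategy.
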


\end{enumerate}

\subsubsection*{\bf Step 2: From a path deviation for $\tilde \varphi_{a,\lambda}$ to one for $\varphi_{a,\lambda}$}

 We use the path deviation on $\tilde \varphi_{a,\lambda}$, together with some bounds on the behavior of the diffusion for large $t$ to get the following path diffusion for $\varphi_{a,\lambda}$.

\begin{theorem}
\label{thm:pathbess}
Fix $\beta>0$ and let $\varphi_{a,\lambda}(t)$ be the process defined in (\ref{phi}) with $a>-1$. Then the sequence of rescaled processes $(\tfrac{\varphi_{a,\lambda}(t)}{ \lambda}, t\in [0,\infty))$ satisfies a large deviation principle on $C[0,\infty)$ with scale $\lambda^2$ and good rate function $\Pathbess$. The rate function $\Pathbess$ is defined as 
\begin{equation}
\Pathbess(g) = \int_0^\infty \mathfrak{h}^2(t) \ldp \left( g'(t)/\mathfrak{h}(t)\right)dt, \quad \textup{with} \quad  \mathfrak{h}(t)=\mathfrak{h}_\beta(t)=\beta e^{-\tfrac{\beta}{8}t} \notag
\end{equation}
in the case where $g(0)=0$ and $g$ is absolutely continuous with non-negative derivative $g'$. In all other cases  $\Pathbess(g)$ is defined as $\infty$. 
\end{theorem}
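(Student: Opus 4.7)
My plan is to deduce the result from Theorem \ref{thm:tildepath} via a piecewise-constant-drift approximation on compact time intervals, followed by a Dawson--G\"artner projective limit to extend to $[0,\infty)$. The key observation is that the only structural difference between \eqref{phi} and \eqref{eq:tildephi} is that the constant drift $\lambda\, dt$ of $\tilde\varphi_{a,\lambda}$ is replaced by the time-dependent drift $\lambda \mathfrak{h}(t)\, dt$, where $\mathfrak{h}(t)=\beta e^{-\beta t/8}$ is precisely the factor appearing in the target rate function.

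First I would establish, for every fixed $T>0$, the path LDP on $C[0,T]$ with the uniform topology for $(\varphi_{a,\lambda}(t)/\lambda,\, t\in[0,T])$ at scale $\lambda^2$ and rate function $\int_0^T \mathfrak{h}^2(t)\ldp(g'(t)/\mathfrak{h}(t))\,dt$. To do this, I freeze the drift on a partition $0=t_0<t_1<\cdots<t_n=T$, replacing $\lambda\mathfrak{h}(t)$ by $\lambda\mathfrak{h}(t_i)$ on each $[t_i,t_{i+1}]$. On piece $i$ the frozen SDE is \eqref{eq:tildephi} with parameter $\lambda\mathfrak{h}(t_i)$, so Theorem \ref{thm:tildepath} applies on that interval at scale $(\lambda\mathfrak{h}(t_i))^2$. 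Rescaling to $g=\varphi/\lambda$, the rate contribution of piece $i$ becomes $\lambda^2 \int_{t_i}^{t_{i+1}} \mathfrak{h}^2(t_i)\ldp(g'(t)/\mathfrak{h}(t_i))\,dt$, whose sum converges to $\lambda^2 \int_0^T \mathfrak{h}^2(t)\ldp(g'(t)/\mathfrak{h}(t))\,dt$ as the mesh tends to zero, by continuity of $\mathfrak{h}$ and lower semicontinuity of $\ldp$. The technical heart of this step is to show exponential equivalence at scale $\lambda^2$ between the true diffusion and its frozen-drift approximation: writing the difference in integrated form and using that $\mathfrak{h}$ is Lipschitz on $[0,T]$, the deterministic drift mismatch is bounded by a constant times the mesh size uniformly in $\lambda$, while the oscillatory integrals $\int\sin(\varphi/2)\,dt$, $\int\sin\varphi\,dt$ and the martingale term are controlled by the sharp exponential tail bound \eqref{eq:oscillation3} of Proposition \ref{prop:oscillation}.

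To pass to $[0,\infty)$, I equip $C[0,\infty)$ with the topology of uniform convergence on compacts. The restriction maps $\pi_T\colon C[0,\infty)\to C[0,T]$ form a projective system, and Dawson--G\"artner upgrades the consistent family of finite-$T$ LDPs to an LDP on $C[0,\infty)$ with rate function $\sup_T \int_0^T \mathfrak{h}^2\ldp(g'/\mathfrak{h})\,dt = \int_0^\infty \mathfrak{h}^2\ldp(g'/\mathfrak{h})\,dt$, the equality being monotone convergence since $\ldp\ge 0$. Goodness of $\Pathbess$ follows from the superlinear growth of $\ldp$ at infinity (read off from \eqref{eq:uptail}) combined with $\mathfrak{h}\in L^1(0,\infty)$, which forces level sets to be equicontinuous on compacts and equibounded at infinity.

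The main obstacle I expect is the exponential-equivalence step: because the target is an LDP rather than weak convergence, the approximation error between $\varphi_{a,\lambda}$ and its frozen version must be controlled at probability scale $e^{-c\lambda^2}$ uniformly in $\lambda$, not merely in probability, and the oscillatory-plus-martingale combination requires the full strength of \eqref{eq:oscillation3} together with a Gronwall-type propagation of the mesh error through the SDE. A secondary difficulty is ruling out any extra contribution from $t=\infty$ in the Dawson--G\"artner limit, i.e.\ showing that $|g(\infty)-g(T)|\to 0$ with LDP-scale probability as $T\to\infty$; this again reduces to the exponential decay of $\mathfrak{h}$ in combination with the martingale tail in \eqref{eq:oscillation3}.
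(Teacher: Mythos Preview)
Your overall architecture---freeze the time-dependent drift on a mesh, invoke Theorem~\ref{thm:tildepath} on each piece, then pass to $[0,\infty)$---is exactly the paper's plan. The gap is in the exponential-equivalence step, where you claim that Proposition~\ref{prop:oscillation} (in particular the tail bound \eqref{eq:oscillation3}) controls ``the oscillatory integrals \dots\ and the martingale term.'' It does not. Proposition~\ref{prop:oscillation} is a statement about $dt$-integrals of $e^{ic\varphi_{a,\lambda}}$ (possibly multiplied by $e^{-ic\psi}-1$); it says nothing about the stochastic integral
\[
\int_0^t \im\bigl[e^{i\varphi_{a,\lambda}/2}(e^{-i\psi/2}-1)\bigr]\,dB_s
\]
appearing in the difference SDE. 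The quadratic variation of this martingale is $\int_0^t 4\sin^2(\psi/4)\,ds$, which is bounded only by $4T$ in the crude case, yielding at best $P(|M_T|>\delta\lambda)\le e^{-c\delta^2\lambda^2/T}$. This is finite at scale $\lambda^2$ and, crucially, does \emph{not} improve as the mesh $N\to\infty$; hence it cannot furnish an exponentially good approximation. A Gronwall/stopping-time refinement does not save you either: once $|\psi|$ is of order $\epsilon\lambda$, the quadratic variation is of order $\epsilon^2\lambda^2 T$, so the martingale fluctuates on the same $\lambda$-scale you are trying to control.

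The paper handles this step by a genuinely different mechanism: it dominates $\psi_{a,\lambda,N}$ by the process $\tilde\psi_{\lambda,\tilde\lambda}$ of \eqref{eq:tildepsi} with $\tilde\lambda=\lambda\beta T/(8N)$, and then proves hitting-time moment bounds for $\tilde\psi$ via a Girsanov change of measure (Proposition~\ref{prop:taubnd2}), leading to Lemma~\ref{lemma:psibnd}. That lemma gives a bound of the form $-D C^2 t\,\II(q/C)$ with $C=\beta T/(8N)$, and this \emph{does} tend to $-\infty$ as $N\to\infty$ because of the superlinear growth of $\II$. These Girsanov/hitting-time estimates are the substantive new technical content, and they are not a consequence of Proposition~\ref{prop:oscillation}. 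Your Dawson--G\"artner step for passing to $C[0,\infty)$ is a reasonable alternative to the paper's explicit truncation (Approximation~1 with Proposition~\ref{prop:phitail}) if one only wants the path LDP in the local-uniform topology; note however that the paper's truncation additionally supplies the tail control at $t=\infty$ needed for the downstream contraction to the endpoint in Theorem~\ref{thm:bess}, which a bare Dawson--G\"artner argument does not.
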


\begin{enumerate}
\item Upper bound

\noindent{\it Approximation 1}: Truncation
	
	Fix $T>0$, the value of which will go to infinity later. Define 
	\[
	\varphi_{a,\lambda}^{(1)} = \varphi_{a,\lambda}(t) \ind(t\le T)+\Big(\varphi_{a,\lambda}(T)+ \lambda(e^{-\frac{\beta}{8}T}- e^{-\frac{\beta}{8}t})\Big)\ind(t>T).
	\]
	Then for $T$ sufficiently large (not depending on $\lambda$), $\limsup_{\lambda\to \infty} \frac{1}{\lambda^2} \log P(\|\varphi_{a,\lambda}^{(1)}- \varphi_{a,\lambda}\| \ge \delta \lambda)  \le c_1 T \delta^2$. The proof of this for the bulk is done in Proposition 13 of \cite{DHBV} with the exception of the very last bound used, which bounds $P(\varphi_{a,\lambda}(\infty)- \varphi_{a,\lambda}(T\lambda) \ge \delta \lambda/2)$. This bound is replaced by Proposition \ref{prop:phitail} which will be given below. 
	
	\bigskip
	
\noindent{\it Approximation 2}: Piecewise constant drift
	
	Define a piecewise constant function that approximates the function $f(t) = \frac{\beta}{8}e^{-\frac{\beta}{8}t}$ by 
	\[
	f_N(t) = f(Ti/N), \quad t\in [Ti/N, T(i+1)/N)
	\]
	Suppose $\varphi_{a,\lambda, N}(t)$ is a diffusion that satisfies the same SDE as $\varphi_{a,\lambda}$ except where the $f(t)dt$ has been replaced by a $f_N(t)dt$ term. Then define
	\[
	\varphi_{a,\lambda}^{(2)}(t) = \varphi_{a,\lambda,N}(t) \ind (t\le T) + (\varphi_{a,\lambda,N}(T)+  \lambda(e^{-\frac{\beta}{8}T}- e^{-\frac{\beta}{8}t})\Big)\ind(t>T).
	\]
	Then 
	\[
	\lim_{\lambda\to \infty} \frac{1}{\lambda^2} \log P(\|\varphi_{a,\lambda}^{(1)}- \varphi_{a,\lambda}^{(2)}\| \ge \delta \lambda)  \le D \left( \frac{\beta T}{8N}\right)^2 T \II \left( \frac{\delta 8N}{\beta T^2}\right).
	\]
	The method of proof is the same as in \cite{DHBV}, but there bound used in line (56) does not hold for the $\tilde \varphi_{a,\lambda}$ diffusion and is replaced by Lemma \ref{lemma:psibnd} given below.

\bigskip	
	
\noindent{\it Approximation 3}: Piecewise constant path 
	
	Let $\pi_{MN}$ be the projection of a path onto a piecewise linear path defined by 
	\[
	(\pi_{MN} g)(Ti/(MN)) = \lfloor g(Ti/(MN))\rfloor_{2\pi}, \qquad \text{ for } t = Ti/(MN)
	\]
	and linear in between these values. Define
	\[
	\varphi_{a,\lambda}^{(3)}(t) =  \pi_{MN}\varphi_{a,\lambda, N}(t)\ind(t\le T)+ (\pi_{MN} \varphi_{a,\lambda,N}(T)+  \lambda(e^{-\frac{\beta}{8}T}- e^{-\frac{\beta}{8}t})\Big)\ind(t>T).
	\]
	This approximation may be treated in the same manor as $\alpha_\lambda^{(3)}$ defined in \cite{DHBV} for both determining the probability of it being close to some particular path, as well as the probability that it is close to $\varphi_{a,\lambda}^{(2)}$.

\item Lower bound

The proof of the lower bound uses similar ideas. We will essentially reuse approximations $\varphi_{a,\lambda}^{(1)}$ and $\varphi_{a,\lambda}^{(2)}$. We show that $\varphi_{a,\lambda}^{(2)}$ stays close to a particular path by making use of the path deviation result on $\tilde \varphi_{a,\lambda}$. The approximation bounds to show that this is sufficient are the same as those in the upper bound. For more on the argument we refer the reader to \cite{DHBV}.

\end{enumerate}

\subsubsection*{\bf Step 3: From a path deviation to the endpoint}

The final step in the proof of Theorem \ref{thm:bess} is to use the contraction principle to go from the path deviation to a large deviation result on the end point. We use the existing analysis in section 7 of \cite{DHBV} and a relationship between the $\Bess$ and $\Sineb$ rate functions to draw our conclusion. Consider the $\Pathbess$ rate function given in Theorem \ref{thm:pathbess}, we apply the change of variables $x = (1-e^{-\frac{8}{\beta}t})$ to get the modified rate function
\[
 \Pbess (\tilde g) = 8 \beta \int_0^1 (1-x)\ldp(\tilde g'(x)/8) dy
\]
where $\tilde g(x) = g(- \tfrac{8}{\beta}\log(1-x))$.  The path large deviation rate function for the $\Sineb$ process \cite{DHBV} is related to this one by
\[
 \Pbess (g) = 32 \Psine (g/8).
\] 
Lastly note that we want to optimize over $g$ with endpoint $4\pi \rho$ where in the $\Sineb$ case we had an endpoint of $2\pi  \rho$.  
This gives us that $\bessldp (\rho) = 32 \sineldp(\rho / 4)$.

\subsubsection*{The details for step 2 part 1 approximations 1 and 2}


The following proposition replaces a tail bound used for the $\alpha_\lambda$ diffusion that does not exist for the $\varphi_{a,\lambda}$ diffusion because of the diffusion dependent drift terms.

\begin{proposition}
\label{prop:phitail}
Let $\varphi_{a,\lambda}$ be the diffusion defined in (\ref{phi}), $T>0$ and $\epsilon>0$ fixed, then 
\begin{equation}
\lim_{\lambda \to \infty} \frac{1}{\lambda^2} \log P(\varphi_{a,\lambda}(\infty) - \varphi_{a,\lambda}(T \lambda) \geq \epsilon \lambda) \leq -c_\beta T \epsilon.
\end{equation}
where $c_\beta$ is some explicitly computable constant depending only on $\beta$.
\end{proposition}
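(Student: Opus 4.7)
The plan is to adapt the Markov-reduction strategy used for the bulk diffusion $\alpha_\lambda$ (Proposition 13 of \cite{DHBV}) to the hard-edge diffusion $\varphi_{a,\lambda}$. The idea is that after time $T\lambda$ the $\lambda$-driving term in (\ref{phi}) has effective size $\mu := \lambda e^{-\beta T\lambda/8}$, which is super-exponentially small in $\lambda$, so further growth of the diffusion is a rare event that can be controlled using the tail bound (\ref{eq:uptail}) for the constant-drift approximation $\tilde\varphi_{a,\mu}$.

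First I would apply the Markov property at time $T\lambda$: combined with the $4\pi$-periodicity of the coefficients of (\ref{phi}) in the spatial variable, the shifted process $\varphi_{a,\lambda}(T\lambda+s) - \lfloor\varphi_{a,\lambda}(T\lambda)\rfloor_{4\pi}$ is distributed as a solution $\hat\varphi(s)$ of (\ref{phi}) with parameter $\mu$ and random initial condition in $[0,4\pi)$. The proposition therefore reduces to a uniform-in-initial-condition bound
\[
P\bigl(\hat\varphi(\infty) - \hat\varphi(0) \geq \epsilon\lambda\bigr) \leq \exp(-c_\beta T \epsilon \lambda^2).
\]
Since $\lfloor\hat\varphi\rfloor_{2\pi}$ is monotone non-decreasing and both drift and noise vanish at multiples of $2\pi$, reaching $\hat\varphi(0)+\epsilon\lambda$ requires completing $N=\Theta(\epsilon\lambda)$ full $2\pi$-traversals.

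Next I would compare $\hat\varphi$ to a constant-drift diffusion via the time change $u = 1 - e^{-\beta s/8}$, which maps $s\in[0,\infty)$ to $u\in[0,1)$ and, crucially, transforms the $\mu$-drift into the constant drift $8\mu\,du$. Ignoring for the moment the singular factors $(1-u)^{-1}$ and $(1-u)^{-1/2}$ that the remaining drifts and the noise pick up, the resulting diffusion behaves over $u\in[0,1)$ like $\tilde\varphi_{a,8\mu}$ run for unit time. The tail bound (\ref{eq:uptail}) applied with the role of $\lambda$ played by $8\mu$, $t=1$, and with $q=\epsilon\lambda/(8\mu)\sim (\epsilon/8)e^{\beta T\lambda/8}$ gives an exponent of order $(\epsilon\lambda)^2 \log q \sim T\epsilon^2 \lambda^3$, which comfortably dominates the required $T\epsilon\lambda^2$.

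The hard part will be controlling the singular coefficients near $u=1$ introduced by the time change, since the non-$\mu$ drift terms $\frac{\beta}{2}(a+\tfrac12)\sin(\varphi/2)+\tfrac12\sin\varphi$ are not uniformly dominated by $\tilde\varphi_{a,8\mu}$ along the transformed path. To handle this I would split the original time axis into $[T\lambda, S]$ and $[S,\infty)$ for a suitable $S$ of order $T\lambda + \log\lambda$: on the first piece the time-changed coefficients remain bounded and the comparison above applies directly; on the second piece the residual $\mu$-drift $\beta\mu e^{-\beta S/8}$ is already at most of order $\lambda^{-1}$, so the additional growth is governed by the fluctuations of $\hat\varphi$ about a fixed point of the non-$\mu$ drift, which can be bounded using the oscillatory integral estimates of Proposition~\ref{prop:oscillation}, in particular the Gaussian tail (\ref{eq:oscillation3}). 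Combining the estimates from the two pieces and taking $\lambda\to\infty$ yields the desired inequality with an explicit constant $c_\beta$ depending only on $\beta$.
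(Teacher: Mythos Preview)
Your Markov reduction to a diffusion $\hat\varphi$ with parameter $\mu=\lambda e^{-\beta T\lambda/8}$ and initial value in $[0,4\pi)$ is exactly how the paper begins. After that the routes diverge, and the second piece of your split has a real gap.

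The paper does not compare to the constant-drift diffusion or appeal to (\ref{eq:uptail}). It changes to the hyperbolic coordinate $X_{\hat\lambda}$ of (\ref{eq:x1}) (with $\sqrt\lambda$ replaced by $\lambda$), notes that each $4\pi$-crossing of $\hat\varphi$ forces an explosion of a copy of $X_{\hat\lambda}$ with an even smaller parameter, and hence
\[
P\bigl(\hat\varphi(\infty)\ge\epsilon\lambda\bigr)\;\le\;\bigl[P(X_{\hat\lambda}\text{ explodes})\bigr]^{\lfloor\epsilon\lambda/4\pi\rfloor-1}.
\]
The single-crossing probability is then bounded by $e^{-c\lambda T}$ via $Z=X_{\hat\lambda}+B$, the inequality $\cosh(Z-B)\le 2\cosh Z\cosh B$, and elementary tail estimates for $\int_0^\infty e^{-\beta s/8+B_s}\,ds$ split over $[0,\lambda T]$ and $[\lambda T,\infty)$. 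Raising to the power $\Theta(\epsilon\lambda)$ produces the required exponent $c_\beta T\epsilon\lambda^2$.

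The gap in your plan is the tail $[S,\infty)$. Proposition~\ref{prop:oscillation} is stated under the hypothesis $T\le\tfrac{8}{\beta}\log\lambda'$, and the bound (\ref{eq:oscillation3}) carries the exponent $(\lambda')^2 e^{-\beta T/4}$, where $\lambda'$ is the parameter of the diffusion in question. On $[S,\infty)$ that parameter is at most $\mu$, which is exponentially small in $\lambda$, so the hypothesis is vacuous and the exponent is essentially zero: the proposition yields nothing once the drift is small, because its proof relies on rapid winding of $\varphi$. A comparison to $\tilde\varphi_{a,8\mu}$ does not rescue this either, since in the original time variable $\tilde\varphi_{a,\beta\mu}(\infty)=\infty$ almost surely, while after the time change the factors $(1-u)^{-1}$ and $(1-u)^{-1/2}$ are not uniformly bounded on your first piece when $S-T\lambda\sim\log\lambda$ (they grow like $\lambda^{\beta/8}$). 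The hard content of the proposition is precisely to control further $4\pi$-crossings after the $\mu$-drift has died; your outline defers this to a tool that does not apply, whereas the paper's $X_{\hat\lambda}$/$Z$ analysis is what actually supplies it.
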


\begin{proof}
Recall that for a fixed $T>0$ the diffusion $\varphi_{a,\lambda}(t+T)$ satisfies the same stochastic differential equation as $\varphi_{a,\tilde \lambda}(t)$ with $\tilde \lambda = \lambda e^{-\frac{\beta}{8}T}$ and initial condition $\varphi_{a,\tilde \lambda}(0)= \varphi_{a,\lambda}(T)$. In particular to study $\varphi_{a,\lambda}(\infty)- \varphi_{a,\lambda}(\lambda T) $ it is sufficient to study the $\varphi_{a,\hat \lambda}$ diffusion with $\hat \lambda = \lambda e^{-\frac{\beta}{8}\lambda T}$ and random initial condition in $[0,4\pi]$. 

We will start with the case $a\ge -1/2$. Recall the diffusion $X_{1}$ from section 2 defined in (\ref{eq:x1}). We are working with the singular value process, therefore we work with a diffusion where $\sqrt \lambda$ has been replaced with $\lambda$. In particular we study $X_\lambda$ which satisfies
\[
dX_\lambda(t) = \left( \frac{\beta}{4}(-a - \frac{1}{2})+ \frac{\beta}{2} \lambda e^{-\frac{\beta}{8}t} \cosh X_{\lambda}(t)\right)dt - dB_t, \qquad X_\lambda(0)=-\infty.
\] 
Notice that in order to have $\varphi_{a,\lambda}(\infty)  \ge k 4\pi$ we must have that it crosses an interval of the type $ [\ell 4\pi-2\pi , \ell 4\pi]$ at least $k$ times. On this interval we can do a change of variables so this is equivalent to $X_{1, \lambda_{\ell}}$ exploding for $\ell \le k$ where $\lambda_\ell$ is $\lambda e^{-\frac{\beta}{4}t_\ell}$ where $t_\ell$ is the hitting time of $\ell 4\pi-2\pi$. Notice that as $\lambda$ decreases the likelihood of explosion decreases, so we may use the following bound
\begin{align}
P(\varphi_{a,\lambda}(\infty)- \varphi_{a,\lambda}(\lambda T) \ge \lambda \eps) & \le P( \varphi_{a,\hat \lambda}(\infty) \ge \lambda\eps -4\pi)\notag \\
& \le \Big[ P\big( X_{\hat \lambda} \text{ does not explode}\big)\Big]^{\lfloor \frac{\lambda \eps}{4\pi}\rfloor-1}\label{eq:bnd3}.
\end{align} 
In order to study the probability that $X_{\hat \lambda}$ explodes we define $Z = X_{\hat \lambda}+ B_t$, then $Z$ explodes at the same time as $X_{\hat \lambda}$ and satisfies the differential equation
\[
Z'(t) = - \frac{\beta}{4}(a+\frac{1}{2}) + \frac{\beta}{2} \hat \lambda e^{-\frac{\beta}{8}t} \cosh (Z(t)-B_t), \qquad Z(0)=-\infty.
\]
In its integrated form we get 
\[
Z(t) = - \frac{\beta}{4}(a+\frac{1}{2})t + \frac{\beta}{2} \hat \lambda \int_0^t e^{-\frac{\beta}{8}s}\cosh(Z(s)-B_s)ds
\]
We now observe that $\cosh(Z-B) \le 2\cosh Z \cosh B$, therefore since the remaining drift term is less than or equal to 0 for $a\ge -1/2$ and we get that 
\begin{align*}
P( Z(t) \text{ crosses} [-M, M] ) &\le P\left( \frac{\beta}{2} \hat \lambda \int_0^\infty e^{-\frac{\beta}{8}s}\cosh Z(s) \cosh B_s  \ind_{\{Z(s) \ge -M\}} ds \ge 2M \right) \\
& \le P\left( \frac{\beta}{4} \hat \lambda \cosh M \int_0^\infty e^{-\frac{\beta}{8}s+B_s} +e^{-\frac{\beta}{8}s-B_s}   ds \ge 2M\right)\\
& \le 2 P\left( \frac{\beta}{4} \hat \lambda \cosh M \int_0^\infty e^{-\frac{\beta}{8}s+B_s} ds \ge M\right),
\end{align*}
where the final line comes because $B_s$ and $-B_s$ have the same distribution. We now break this into two pieces first we consider the integral from $0$ to $\lambda T$, and then we consider the integral from $\lambda T$ to $\infty$. For $\lambda$ large enough so that $\frac{2 M}{\beta \hat \lambda \cosh M} \ge 1$ we get 
\begin{align}
P\Bigg( \frac{\beta}{4} \hat \lambda \cosh M \int_0^{\lambda T} &e^{-\frac{\beta}{8}s+B_s} ds \ge \frac{M}{2}\Bigg)  \le P\left( \sup_{t\in [0, \lambda T]} e^{B_t} \ge \frac{2 M}{\beta \hat \lambda \cosh M}\right)\notag \\
& \le \frac{2}{\sqrt{2\pi}}\exp\left(- \frac{\beta}{16} \lambda T + \frac{\beta}{8}\log \frac{4 M}{\lambda \log M}+ \frac{1}{2\lambda T} \log^2 \frac{4 M}{\lambda \log M}\right). \label{eq:bnd1}
\end{align}
Now for the other integral we look at the probability that $B_t$ stays below the line $\frac{\beta}{16} t$ for $t \ge \lambda T$. If this happens then for large enough $\lambda$ we get
\[
\int_{\lambda T}^\infty e^{-\frac{\beta}{8}s+ B_s} ds \le \frac{16}{\beta} e^{- \frac{\beta}{16}\lambda T} \le \frac{2 M}{\beta \hat \lambda \cosh M}.
\]
Using this we get the following bounds on our second integral term:
\begin{align}
P\bigg( \int_{t\lambda}^\infty e^{-\frac{\beta}{8}s+B_s} & ds \ge \frac{2 M}{\beta \hat \lambda \cosh M}\bigg)    \le  P\bigg(\exists\  t\in [\lambda T, \infty) \text{ such that } B_t \ge \tfrac{\beta}{16}t \bigg)\notag \\
& \le P\big(\exists\  t\in [\lambda T, \infty) \text{ such that } B_t \ge \tfrac{\beta}{16}t, B_{\lambda T}< \tfrac{\beta}{32}\lambda T\big)+P(B_{\lambda T}\ge \tfrac{\beta}{32}\lambda T)\notag \\
& \le P\big( \exists \ t\in [0,\infty) \text{ such that } B_t \le \tfrac{\beta}{16}t+ \tfrac{\beta}{32} \lambda T) + \frac{1}{\sqrt{2\pi}}\exp\Big(-\tfrac{1}{2}(\tfrac{\beta}{32})^2\lambda T\Big)\notag \\
& \le \exp \big(-( \tfrac{\beta}{16})^2 \lambda T\big)+ \frac{1}{\sqrt{2\pi}}\exp\big(-\tfrac{1}{2}(\tfrac{\beta}{32})^2\lambda T\big). \label{eq:bnd2}
\end{align}
Putting the bounds from (\ref{eq:bnd1}) and (\ref{eq:bnd2}) together into the bound in line (\ref{eq:bnd3}) completes the proof of the proposition for $a\ge -1/2$. For $a<-1/2$ we may do a similar analysis using the $X_2$ diffusion in (\ref{eq:x2}).
\end{proof}



We now focus on the second approximation $\varphi^{(2)}_{a,\lambda}(t)$. Observe that the diffusion $\psi_{a,\lambda,N}(t)= \varphi_{a,\lambda,N}(t)-\varphi_{a,\lambda}(t)$ that appears will be stochastically dominated by an diffusion that satisfies the SDE
\begin{align}
d \tilde \psi_{\lambda,\tilde \lambda} &= \frac{\beta}{2}(a+1/2) \im \left[e^{i \frac{ \varphi_{a,\lambda}}{2}}\left(e^{-i \frac{\tilde \psi_{\lambda,\tilde \lambda}}{2}}-1\right)\right]dt + \frac{1}{2}\im \left[e^{i \varphi_{a,\lambda}}\left(e^{-i \tilde \psi_{\lambda,\tilde \lambda}}-1\right)\right]dt\notag \\
&\hspace{1.5cm}+ \tilde \lambda dt  +\im \left[e^{i \frac{ \varphi_{a,\lambda}}{2}}\left(e^{-i \frac{\tilde \psi_{\lambda,\tilde \lambda}}{2}}-1\right)\right]dB_t, \label{eq:tildepsi}
\end{align}
with initial condition $\tilde \psi(0)=0$ for $\tilde \lambda= \lambda \frac{\beta T}{8N}$. To prove that $\tilde \psi_{\lambda, \tilde \lambda}$ has the desired behavior we need to revert to working with jump times. In particular we can show:

\begin{proposition}
\label{prop:taubnd2}
Let $\sigma_k =\inf_t\{ \tilde \psi_{\tilde \lambda,\lambda}(t)=4\pi (k+1)\}- \inf_t\{\tilde \psi_{\tilde \lambda,\lambda}(t)=4\pi k\} $  and fix $A<0, T>0$ and $0< \delta < 1/4$, then there exist constants $C_1$ and $C_2$ depending only on $T, \delta$ and $A$ such that for $\tilde \lambda \ge 1$
\begin{align}\label{expmbnd}
E e^{ 8(1+2\delta)^2\tilde \lambda^2 A \sigma_k- 2\tilde \lambda (1+2\delta) \sigma_k(|A|\wedge  \sqrt{|A|})(2+8\delta)} \leq  e^{-8\tilde \lambda (1+2\delta) \II(A)} + C_1e^{-C_2\lambda^2 (T+1/T)}.
\end{align}
\end{proposition}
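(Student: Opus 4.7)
The plan is to mimic the proof of Proposition 8 of \cite{DHBV}---i.e.\ the analogous one-step moment bound for the jump times of the $\alpha_\lambda$/$\tilde\varphi$ diffusion---while absorbing the extra $\varphi_{a,\lambda}$-dependent drift and diffusion coefficients in (\ref{eq:tildepsi}) via a conditioning argument based on Proposition \ref{prop:oscillation}. The key point is that, up to small corrections, $\tilde\psi_{\lambda,\tilde\lambda}$ behaves like the uncoupled diffusion with constant drift $\tilde\lambda$, and the $\Sineb$-style Girsanov computation then delivers the $e^{-8\tilde\lambda(1+2\delta)\II(A)}$ factor.

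First I would introduce a ``good event'' $G$ on which, for each of the finitely many constants $c\in\{1/2,1\}$ appearing in the expansions
\[
\im\!\bigl[e^{i\varphi/2}(e^{-i\tilde\psi/2}-1)\bigr]=\sin\!\bigl(\tfrac{\varphi-\tilde\psi}{2}\bigr)-\sin\!\bigl(\tfrac{\varphi}{2}\bigr),\qquad \im\!\bigl[e^{i\varphi}(e^{-i\tilde\psi}-1)\bigr]=\sin(\varphi-\tilde\psi)-\sin\varphi,
\]
the corresponding oscillatory integrals in $\varphi_{a,\lambda}$ over $[0,T]$ are uniformly below $\delta/\lambda^{1/2}$, say. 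The tail bound (\ref{eq:oscillation3}) applied to each such integral gives $P(G^c)\le C_1 e^{-C_2\lambda^2(T+1/T)}$, which accounts for the second term on the right-hand side after a crude bound on the exponential integrand restricted to $G^c$.

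On $G$, the quadratic variation of the martingale term of $\tilde\psi$ expands via product-to-sum as
\[
\bigl(\sin\!\bigl(\tfrac{\varphi-\tilde\psi}{2}\bigr)-\sin\!\bigl(\tfrac{\varphi}{2}\bigr)\bigr)^{\!2}
= \tfrac{1}{2}(1-\cos\tilde\psi) + R(t),
\]
where $R(t)$ consists only of $\cos(c_1\varphi+c_2\tilde\psi)$-type terms. A stochastic integration-by-parts of the same flavour as in the proof of Proposition \ref{prop:oscillation} (pulling $\tilde\psi$ out as a slowly varying modulation and integrating the fast $e^{ic\varphi}$ factor against the relevant time-dependent amplitude) bounds $\int_0^\sigma R(s)\,ds$ by $O(\delta)\sigma$ on $G$, and an identical treatment handles the two drift integrals. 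With these reductions in hand, the dynamics on $G$ are, up to errors of size $\delta$, those of the $\Sineb$-analog diffusion, except on the larger period $4\pi$, at speed $\tilde\lambda$, and with a noise coefficient giving rise to a factor $8$ instead of $1/8$ in the exponent---this is where the $8$ in $8\tilde\lambda\II(A)$ and in $8\tilde\lambda^2 A\sigma$ comes from. A Girsanov change of measure against $\exp\!\bigl(\tfrac{\tilde\lambda\sqrt{A}}{2}\int\!\sin(\tilde\psi/2)\,dB_s-\cdots\bigr)$, exactly as in \cite{DHBV}, reduces the problem to a deterministic travel-time computation through $[4\pi k,4\pi(k+1)]$ driven by a Brownian clock; the explicit evaluation using the complete elliptic integrals $K,E$ produces $\II(A)$.

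The main obstacle will be bookkeeping: accounting for where each of the $(1+2\delta)$ and $(1+2\delta)^2$ factors and the $(2+8\delta)$ multiplier of $|A|\wedge\sqrt{|A|}$ comes from. Morally, the quadratic-variation estimate $\int_0^\sigma\!\sin^2(\tilde\psi/2)\,ds \le (1+2\delta)\cdot\tfrac12\sigma + O(\delta)$ on $G$ produces the $(1+2\delta)^2$ factor multiplying $A$ (once through the Girsanov density and once through the clock), while the linear $(1+2\delta)$ is from the main drift estimate $\tilde\lambda\sigma(1+2\delta)$, and the $(2+8\delta)$ coefficient of $(|A|\wedge\sqrt{|A|})$ encodes both the $a$-dependent finite-variation piece $\tfrac{\beta}{2}(a+\tfrac12)$ and the extra bounded-variation drift $\tfrac12\sin\varphi$ evaluated on $G$. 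Once all these factors are tracked, combining with $E[\cdots;\,G^c]\le C_1 e^{-C_2\lambda^2(T+1/T)}$ closes the bound.
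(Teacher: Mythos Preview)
Your overall strategy---split off a good event where the $\varphi$-oscillations are negligible, then run the \cite{DHBV} Girsanov argument on the residual dynamics---is the right instinct, but the paper executes it through a different and more robust mechanism, and your direct version has a real gap.

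The paper does \emph{not} apply Girsanov directly to $\tilde\psi$. Instead it first passes to the exploding coordinate $Y$ via $\tilde\psi = 8\arctan(e^{Y})$ on $[4\pi k,4\pi(k+1))$, so that $\sigma_k$ becomes the explosion time of $Y$. In this coordinate the martingale part splits as $\tfrac{1}{2\sqrt{2}}\,dB_t$ plus an oscillatory piece $g(Y,\varphi)\,dB_t$; the paper then sets $W_t=\int_0^t g(Y,\varphi)\,dB_s$ and works with $\hat Y_t=Y_t-W_t$, which explodes at the same time as $Y$. On the good event (where $W_t$ and the oscillatory drift integrals are uniformly small on $[0,T]$) one gets a \emph{pathwise sandwich} $Z^-_t\le \hat Y_t\le Z^+_t$ between two clean diffusions
\[
dZ^\pm=\tfrac{\tilde\lambda}{4}(1\pm 2\delta)\cosh Z^\pm\,dt\pm(\tfrac{1}{8}+\delta)\,dt+\tfrac{1}{2\sqrt{2}}\,dB_t,\qquad Z^\pm(0)=-\infty,
\]
and hence an ordering of explosion times. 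The \cite{DHBV} Girsanov computation is then applied to $Z^\pm$, which have no $\varphi$-dependence at all; this is where the $(1+2\delta)$ and $(1+2\delta)^2$ factors and the $8$ in front of $\II(A)$ arise, and the $a$-dependent drift has already been absorbed into the $\pm(\tfrac{1}{8}+\delta)$ term, not into the $(2+8\delta)$ multiplier as you suggest.

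The gap in your route is the step ``on $G$, the dynamics of $\tilde\psi$ are, up to errors of size $\delta$, those of the $\Sineb$-analog diffusion.'' Proposition~\ref{prop:oscillation} controls $\sup_{s\le T}\bigl|\int_0^s e^{ic\varphi}(\cdots)\,dt\bigr|$ on a \emph{fixed} window $[0,T]$, and the tail bound (\ref{eq:oscillation3}) is likewise for deterministic times. To run Girsanov directly on $\tilde\psi$ you would need pathwise control of both the quadratic variation and the drift \emph{up to the random time $\sigma_k$}, and you would need the error to scale like $\delta\cdot\sigma_k$ so that it can be absorbed into the exponent of the moment bound. Neither follows from a single supremum bound on $[0,T]$, and the claimed reduction $\int_0^\sigma R(s)\,ds=O(\delta)\sigma$ ``on $G$'' is not justified. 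The paper's coupling with $Z^\pm$ sidesteps this entirely: once the sandwich holds pathwise on the good set, the explosion-time comparison is automatic, and the Girsanov step is performed on diffusions with no random stopping issues and no $\varphi$-dependence.
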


Following the proof of Lemma 10 in \cite{DHBV} this can be used to show the following bound. 

\begin{lemma}
\label{lemma:psibnd}
Let $\tilde \psi_{\tilde \lambda, \lambda}$ be defined as in (\ref{eq:tildepsi}) with $\tilde \lambda = C\lambda$ for some constant $C$, then for $q>C$ there exists a constant $D$ so that for $\tilde \lambda >2$ we have 
\[
 \lim_{\lambda\to \infty} \frac{1}{\lambda^2} \log P\Big( \lfloor\tilde \psi_{\tilde \lambda, \lambda}(t) \rfloor_{4\pi} \ge q t \lambda\Big) \le 
 -D C^2  t \II(q/C).
\]
\end{lemma}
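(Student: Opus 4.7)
The plan is to follow the proof of Lemma 10 in \cite{DHBV}, using Proposition \ref{prop:taubnd2} as the main exponential-moment input. First I would translate the level-set event into a sum of passage times. When $\tilde\psi_{\tilde\lambda,\lambda}$ is a multiple of $4\pi$, the oscillatory drift terms and the noise term in (\ref{eq:tildepsi}) all vanish and only the positive drift $\tilde\lambda\,dt$ survives, so $\lfloor \tilde\psi_{\tilde\lambda,\lambda}(t)\rfloor_{4\pi}$ is nondecreasing in $t$. Hence the event $\{\lfloor \tilde\psi_{\tilde\lambda,\lambda}(t)\rfloor_{4\pi} \ge qt\lambda\}$ coincides with $\{\sum_{k=0}^{N-1} \sigma_k \le t\}$ for $N = \lceil qt\lambda/(4\pi) \rceil$, and an exponential Markov inequality yields
\[
P\Big(\sum_{k=0}^{N-1}\sigma_k \le t\Big) \le e^{\theta t}\, E\Big[\exp\Big(-\theta \sum_{k=0}^{N-1}\sigma_k\Big)\Big] \qquad \textup{for any } \theta > 0.
\]

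Next, I would choose $\theta = -8(1+2\delta)^2\tilde\lambda^2 A + 2\tilde\lambda(1+2\delta)(|A|\wedge\sqrt{|A|})(2+8\delta)$ with $A<0$, so that $\theta > 0$ and the linear combination of $\sigma_k$ inside the expectation in Proposition \ref{prop:taubnd2} matches $-\theta \sigma_k$. Iterating the single-step bound via the strong Markov property at the successive hitting times gives
\[
E\Big[\exp\Big(-\theta \sum_{k=0}^{N-1}\sigma_k\Big)\Big] \le \Big(e^{-8\tilde\lambda(1+2\delta)\II(A)} + C_1 e^{-C_2\lambda^2(T+1/T)}\Big)^N,
\]
and since $N$ is polynomial in $\lambda$ while the error term decays super-exponentially in $\lambda^2$, the second term contributes negligibly on the $\lambda^2$ scale.

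Finally, substituting $\tilde\lambda = C\lambda$ and $N \sim qt\lambda/(4\pi)$ produces an exponent of order $\lambda^2 t \cdot G(A,\delta,C,q)$ for an explicit function $G$, and optimizing in $A<0$ via the same Legendre-transform relationship between $|A|$ and $\II(A)$ used in \cite{DHBV} produces the rate $-DC^2 t \II(q/C)$ after sending $\delta\to 0$. The main technical obstacle is the strong-Markov iteration: because the drift coefficients in (\ref{eq:tildepsi}) depend on the exogenous diffusion $\varphi_{a,\lambda}$, the $\sigma_k$ are not i.i.d.\ but only conditionally controlled through the joint filtration, so one must verify that Proposition \ref{prop:taubnd2} applies uniformly in the conditioning. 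A secondary nuisance is checking that the additive error $C_1 e^{-C_2\lambda^2(T+1/T)}$ remains subdominant after being raised to the $N$-th power, which amounts to keeping $N$ polynomial in $\lambda$ (true since $N = O(\lambda)$) so that $N$-fold iteration costs only a sub-$\lambda^2$ correction.
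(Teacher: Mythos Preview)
Your proposal is correct and is exactly the approach the paper indicates: the paper does not give a standalone proof of this lemma but simply states that it follows from Proposition~\ref{prop:taubnd2} by the argument of Lemma~10 in \cite{DHBV}, which is precisely the passage-time reduction plus exponential Markov inequality plus strong-Markov iteration that you outline. Your sketch in fact supplies more detail than the paper itself, and you have correctly flagged the one genuine subtlety---that the $\sigma_k$ are not i.i.d.\ because of the exogenous $\varphi_{a,\lambda}$ dependence, so the iteration requires Proposition~\ref{prop:taubnd2} to hold conditionally on the filtration at the successive hitting times.
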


These two propositions are the only major changes needed to adapts the proof of the large deviation result for $\Sineb$ given in \cite{DHBV} to the $\Bess$ process.


\begin{proof}[Sketch of proof of Proposition \ref{prop:taubnd2}]

The proof of the jump time bound for $\sigma_k$ is done through a coupling argument. Let $T_k = \sigma_1+\cdots + \sigma_{k-1}$, we make the change of variables $\tilde \psi_{\tilde \lambda, \lambda} (T_k+t) = 8 \arctan (e^{Y(t)})$ on the interval $[4\pi k, 4\pi(k+1))$. We get that (dropping the subscripts) $Y$ satisfies the stochastic differential equation
\begin{align*}
dY& = \frac{\tilde \lambda}{4}\cosh Y dt - \frac{1}{8} \tanh Y dt- \frac{\beta}{4}(a+\tfrac{1}{2})\Big[ \sin \Big( \frac{\varphi}{2}\Big)\sech Y- \cos \Big( \frac{\varphi}{2}\Big) \tanh Y\Big]dt\\
& \hspace{1.25cm} - \frac{1}{8}\Big[2\cos \varphi \sech^2Y \tanh Y+ 2\sin \varphi \sech Y \tanh^2 Y- \cos \varphi \tanh Y\Big]dt\\
& \hspace{2.25cm}   + \frac{1}{2}\Big[ \sin \Big( \frac{\varphi}{2}\Big)\sech Y- \cos \Big( \frac{\varphi}{2}\Big) \tanh Y\Big]dB_t.
\end{align*}
with initial condition $Y(0)= -\infty$ and $Y$ explodes at the hitting time $\sigma_k$. These oscillatory integrals are not the same as the ones that appear in Proposition \ref{prop:oscillation}, but they are amenable to the same type of analysis. 
We can check that the quadratic variation of this process is 
\[
d[Y]_t = \frac{1}{8}dt+\frac{1}{8} \Big[ 2\cos \varphi \tanh Y + 2 \sin \varphi \sech Y \tanh Y- \cos \varphi \Big]dt.
\]
Let $ \os_{Y,t}$ denote the finite variation terms involving $\sin(c\varphi)$ or $\cos(c\varphi)$. We can check that $P(|\int_0^T  \os_{Y,t}dt|\ge \delta)\le \exp\left[ - C \delta^2 \lambda^2 /T\right]$ for some constant $C$. For convenience we write 
\[
\frac{1}{2}\Big[ \sin \Big( \frac{\varphi}{2}\Big)\sech Y- \cos \Big( \frac{\varphi}{2}\Big) \tanh Y\Big]dB_t = \frac{1}{2} \Big[ \frac{1}{\sqrt 2} + g(Y, \varphi)\Big] dB_t.
\]
Let $W_t =\int_0^t g(Y,\varphi)dB_s$ and consider the diffusion $\hat Y$ given by $\hat Y_t = Y_t - W_t$. Then 
\begin{align*}
d\hat Y &= \frac{\tilde \lambda}{4} \left( \cosh \hat Y \cosh W_t - \sinh \hat Y \sinh W_t \right) dt+ \frac{1}{2\sqrt 2} dB_t - \frac{1}{8} \tanh Y dt+ \os_{Y,t}dt
\end{align*}
with $\hat Y(0)=-\infty$.
Notice that $W_t$ is finite almost surely, and so $\hat Y$ explodes at the same time as $Y$. This explosion time is $\sigma_k$. Therefore proving bounds on the explosion time of $\hat Y$ is enough.

We can choose $\delta$ small enough, so that we get 
$Z_t^- \le \hat Y_t \le Z_t^+
$
where 
\begin{align}
\label{eq:zpm}
dZ^\pm = \frac{\tilde \lambda}{4}(1\pm 2\delta)\cosh Z^\pm dt \pm (\tfrac{1}{8}+\delta) dt + \frac{1}{2\sqrt 2}dB_t\qquad Z^\pm(0)=-\infty.
\end{align}
The explosion time of $\hat Y_t$ will be bounded between the explosion times of $Z^{\pm}$ (on the set where the oscillatory integrals are small). Now the $Z^{\pm}$ diffusions may be treated using the same methods as Proposition 8 in \cite{DHBV}.

\end{proof}


{\small

\bibliographystyle{abbrv}
\bibliography{rmt}

}

\end{document}